\definecolor{verylight}{gray}{0.97}
\definecolor{light}{gray}{0.9}
\definecolor{medium}{gray}{0.85}
\definecolor{dark}{gray}{0.6}
\newcommand{\vertex}{\node[vertex]}
\tikzstyle{vertex}=[circle, draw, inner sep=0pt, minimum size=4pt]
\tikzset{main node/.style={circle,fill=blue!15,draw,minimum size=.5cm,inner sep=0pt},
}
\tikzstyle{vertex}=[circle, draw, inner sep=0pt, minimum size=6pt]
\def\frk{\frak}               
\def\Phi{{\frk n}}
\def\Phi{{\frk N}}
\def\opn#1#2{\def#1{\operatorname{#2}}} 
\opn\chara{char} \opn\length{\ell} \opn\pd{pd} \opn\rk{rk}
\opn\projdim{proj\,dim} \opn\injdim{inj\,dim} \opn\rank{rank}
\opn\depth{depth} \opn\grade{grade} \opn\hte{ht}
\opn\embdim{emb\,dim} \opn\codim{codim}
\opn\Tr{Tr} \opn\bigrank{big\,rank}
\opn\superheight{superheight}\opn\lcm{lcm}
\opn\trdeg{tr\,deg}
\opn\reg{reg} \opn\lreg{lreg} \opn\ini{in} \opn\lpd{lpd}
\opn\size{size}\opn\bigsize{bigsize}
\opn\cosize{cosize}\opn\bigcosize{bigcosize}
\opn\sdepth{sdepth}\opn\sreg{sreg}
\opn\link{link}\opn\fdepth{fdepth}
\opn\div{div} \opn\Div{Div} \opn\cl{cl} \opn\Cl{Cl}
\opn\Spec{Spec} \opn\Supp{Supp} \opn\supp{supp} \opn\Sing{Sing}
\opn\Ass{Ass} \opn\Min{Min}\opn\Mon{Mon} \opn\dstab{dstab} \opn\astab{astab}
\opn\Syz{Syz}
\opn\Ann{Ann} \opn\Rad{Rad} \opn\Soc{Soc}
\opn\Im{Im} \opn\Ker{Ker} \opn\Coker{Coker} \opn\Am{Am}
\opn\Hom{Hom} \opn\Tor{Tor} \opn\Ext{Ext} \opn\End{End}
\opn\Aut{Aut} \opn\id{id}
\opn\nat{nat}
\opn\pff{pf}
\opn\Pf{Pf} \opn\GL{GL} \opn\SL{SL} \opn\mod{mod} \opn\ord{ord}
\opn\Gin{Gin} \opn\Hilb{Hilb}\opn\sort{sort}
\opn\initial{init}
\opn\ende{end}
\opn\height{ht}
\opn\type{type}
\opn\aff{aff} \opn\con{conv} \opn\relint{relint} \opn\st{st}
\opn\lk{lk} \opn\cn{cn} \opn\core{core} \opn\vol{vol}
\opn\link{link} \opn\star{star}\opn\lex{lex}
\opn\gr{gr}
\def\pot#1#2{#1[\kern-0.28ex[#2]\kern-0.28ex]}
\opn\dirlim{\underrightarrow{\lim}}
\opn\inivlim{\underleftarrow{\lim}}
\def\Implies{\ifmmode\Longrightarrow \else
        \unskip${}\Longrightarrow{}$\ignorespaces\fi}
\def\implies{\ifmmode\Rightarrow \else
        \unskip${}\Rightarrow{}$\ignorespaces\fi}
\def\iff{\ifmmode\Longleftrightarrow \else
        \unskip${}\Longleftrightarrow{}$\ignorespaces\fi}
\newtheorem{Theorem}{Theorem}[section]
 \newtheorem{Lemma}[Theorem]{Lemma}
 \newtheorem{Corollary}[Theorem]{Corollary}
 \newtheorem{Proposition}[Theorem]{Proposition}
 \newtheorem{Example}[Theorem]{Example}
\let\epsilon\varepsilon
\let\kappa=\varkappa
\def\qed{\ifhmode\textqed\fi
      \ifmmode\ifinner\quad\qedsymbol\else\dispqed\fi\fi}
\def\textqed{\unskip\nobreak\penalty50
       \hskip2em\hbox{}\nobreak\hfil\qedsymbol
       \parfillskip=0pt \finalhyphendemerits=0}
\def\dispqed{\rlap{\qquad\qedsymbol}}
\opn\dis{dis}
\def\pnt{{\raise0.5mm\hbox{\large\bf.}}}
\opn\Lex{Lex}
\begin{document}

\author[Mafi and Naderi]{Amir Mafi and Dler Naderi}
\title{Almost Cohen-Macaulay bipartite graphs and connected in codimension two}

\address{Amir Mafi, Department of Mathematics, University Of Kurdistan, P.O. Box: 416, Sanandaj, Iran.}
\email{A\_Mafi@ipm.ir}
\address{Dler Naderi, Department of Mathematics, University of Kurdistan, P.O. Box: 416, Sanandaj,
Iran.}
\email{dler.naderi65@gmail.com}

\begin{abstract}
In this paper we study almost Cohen-Macaulay bipartite graphs. Furthermore, we prove that if $G$ is almost Cohen-Macaulay bipartite graph with at least one vertex of positive degree, then there is a vertex of $\deg(v) \leq 2$. In particular, if $G$ is an almost Cohen-Macaulay bipartite graph and $u$ is a vertex of degree one of $G$ and $v$ its adjacent vertex, then $G\setminus\{v\}$ is almost Cohen-Macaulay. Also, we show that an unmixed Ferrers graph is almost Cohen-Macaulay if and only if it is connected in codimension two. Moreover, we give some examples.
\end{abstract}

\subjclass[1991]{  13C14, 13C05}
\keywords{Almost Cohen-Macaulay rings, Simplicial complex, Connectedness}

\maketitle
\section*{Introduction}
Throughout this paper, we assume that $G$ is a finite simple graph (without loops, multiplies edges and any isolated vertices) with vertex set $V (G)$ and edge set $E(G)$. For $W \subseteq V (G)$ we denote by $G \setminus W$ the subgraph of $G$ obtained by removing all
vertices of $W$ from $G$. Moreover, for any $v \in  V (G)$ we denote by $N_{G}(v)$ the neighbor
set of $v$ in $G$, i.e. $N_{G}(v) = \{u \in V (G) ~|~ \{u, v \} \in E(G) \}$. The inclusive
neighborhood of $v \in V(G)$ is the set $N_{G}[v]$ consisting of $v$ and vertices adjacent to $v$ in
$G$, i.e. $N_{G}[v]= N_{G}(v) \cup \{v \} $.

Let $R=k[x_1, \ldots, x_n]$ be the polynomial ring on $n$ variables over the  field $k$. We can associate to $G$ the ideal $I(G)$ of $R$ which is generated by all the square-free monomials $x_i x_j$ such that $x_i$ is adjacent to $x_j$. The ideal $I(G)$ is called the { \it edge ideal} of $G$. The complementary simplicial complex of $G$ is defined by
$ \Delta_{G} = \{ F \subseteq V(G) ~|~ F ~\textit{is an independent set in} ~G \}$, where $F$ is an independent set in $G$ if none of its elements are adjacent. Note that $\Delta_{G}$ is precisely the Stanley-Reisner simplicial complex of $I(G)$, i.e. $I_{\Delta_{G}}=I(G)$.

The graph $G$ is called Cohen-Macaulay (i.e. CM) if $R/I(G)$ is Cohen-Macaulay. Cohen-Macaulay graphs were studied in several works (see \cite{V2} and \cite{CRT}). A complete classification of Cohen-Macaulay graphs does
not exist. However, all Cohen-Macaulay bipartite graphs have been characterized in a combinatorial way by Herzog and Hibi in \cite{HH4}. A graph $G$ is called bipartite, if $V (G) = V_1 \cup V_2$ with $V_1 \cap V_2 =\emptyset$ such that
$E(G) \subseteq V_{1} \times V_{2}$.  It is easy to see that a graph $G$ is bipartite if and only if it has no cycle of odd length. For a
Cohen-Macaulay bipartite graph $G$, Estrada and Villarreal \cite{EV} showed that $G\setminus \{ u \}$ is Cohen-Macaulay for some vertex $u \in V(G)$. As usual $K_{m,n}$ will denote the complete bipartite
graph containing every edge joining $V_1$ and $V_2$, where $V_1$ and $V_2$ have $m$ and $n$ vertices
respectively and it is easy to see that $K_{m,n}$ is Cohen-Macaulay if and only if $m=n=1$.

A vertex cover of $G$ is a subset $C$ of $V (G)$ such that each edge has at least one vertex in $C$. A minimal vertex cover $C$ of $G$ is a vertex cover such that no proper subset of $C$ is a vertex cover of $G$. Observe that $C$ is a minimal vertex
cover if and only if $V(G) \setminus C$ is a maximal independent set. The graph $G$ is called unmixed if all its minimal vertex covers are of the same cardinality.  All unmixed bipartite graphs have been characterized by Villarreal in \cite{V1}.

Let $G$ be an unmixed bipartite graph with bipartition $V_1= \{ x_1, \ldots, x_n \}$ and $V_2= \{ y_1, \ldots, y_m \}$. Since $G$ is unmixed, it follows that $\hte(I)=n=m$. The edges
$ \{ x_i, y_i \}$ for  $i = 1, \ldots , n$ are called perfect matching edges of $G$. By \cite[Theorem 10.2]{Hara}, the height of $I(G)$ is equal to the
maximum number of independent lines in $G$, i.e. for any unmixed bipartite graph there is a perfect matching. Therefore we may assume that $\{ x_i, y_i \}$ is an edge of $G$ for all $i$. So each minimal vertex cover of $G$ is of the form $\{ x_{i_1} , \ldots , x_{i_s} , y_{i_{s+1}} , \ldots , y_{i_n} \}$, where $\{ i_1, \ldots , i_n \} = [n]$.

The graph $G$ is called almost Cohen-Macaulay (i.e. aCM) if $R/I(G)$ is aCM. We say that  $R/I(G)$ is aCM when $\depth  R/I(G)\geq \dim R/I(G) -1$.  The aCM modules has been studied in \cite{Ha, K1, K2, I, CTT, MT, TM, TMA, MN2}.

In this paper we study  almost Cohen-Macaulay bipartite graphs. Furthermore, we prove that if $G$ is almost Cohen-Macaulay bipartite graph with at least one vertex of positive degree, then there is a vertex of $\deg(v) \leq 2$. In particular, if $G$ is an almost Cohen-Macaulay bipartite graph and $u$ is a vertex of degree one of $G$ and $v$ its adjacent vertex, then $G\setminus\{v\}$ is almost Cohen-Macaulay. Also, we show that an unmixed Ferrers graph is almost Cohen-Macaulay if and only if it is connected in codimension two.
For any unexplained notion or terminology, we refer the reader to \cite{HH1} and \cite{V} . Several explicit examples were performed with help of the computer
algebra systems Macaulay2 \cite{G}.

\section{ Preliminary}
In this section, we recall some definitions and known results which is used in this paper. Let $\Delta$ be a simplicial complex on the vertex set $V = \{x_1, . . . , x_n \}$. Every element of $\Delta$ is called a face of $\Delta$ and a facet of $\Delta$ is a maximal face of $\Delta$ with respect to inclusion. If all facets of $\Delta$ have the same cardinality, then $\Delta$ is called pure. For  the simplicial complex $\Delta$, we may consider a square-free monomial ideal  $I = I_{\Delta}$ of $R$ which is generated by all minimal nonface of $\Delta$ is called the Stanley-Reisner ideal of $\Delta$ and $K[\Delta] = R/I_{\Delta}$ is called the Stanley-Reisner ring.
For the simplicial complex $\Delta$ and $F \in \Delta$, link of $F$ in $\Delta$ is defined as $\lk_{\Delta}(F) = \{G \in \Delta ~| ~G \cap  F = \emptyset, G \cup  F \in \Delta \}$. If $\Delta$ is a simplicial complex with facets $F_1,\ldots, F_t$, we denote $\Delta$ by $ \langle F_1, \ldots, F_t \rangle $,
and $\{F_1, \ldots, F_t \}$ is called the facet set of $\Delta$.

 \begin{Proposition}\label{MV}
 Let $\Delta_1$ and $\Delta_2$ be two simplicial complexes on $[n]$, and let $\Delta = \Delta_1 \cup \Delta_2$ and $\Gamma= \Delta_1 \cap \Delta_2$. Then there exists an exact sequence of the following form
 \begin{align*}
\ldots  &\longrightarrow \tilde{H}_{l}(\Gamma; k) \longrightarrow \tilde{H}_{l}(\Delta_1; k)\oplus \tilde{H}_{l}(\Delta_2; k) \longrightarrow \tilde{H}_{l}(\Delta; k) \\
&\longrightarrow \tilde{H}_{l-1}(\Gamma; k) \longrightarrow \tilde{H}_{l-1}(\Delta_1; k)\oplus \tilde{H}_{l-1}(\Delta_2; k) \longrightarrow \tilde{H}_{l-1}(\Delta; k) \\
&\longrightarrow \ldots
\end{align*}
This sequence is called the reduced Mayer-Vietoris exact sequence (\cite[Proposition 5.1.8]{HH1}).
\end{Proposition}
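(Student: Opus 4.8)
The plan is to obtain the claimed long exact sequence as the one attached, via the standard zig-zag lemma, to a short exact sequence of augmented simplicial chain complexes. Fix the field $k$, and for a simplicial complex $\Sigma$ on $[n]$ write $\tilde{C}_\bullet(\Sigma;k)$ for its augmented chain complex: $\tilde{C}_l(\Sigma;k)$ is the $k$-vector space on the $l$-faces of $\Sigma$, with $\tilde{C}_{-1}(\Sigma;k)$ spanned by the empty face, and the boundary maps the usual alternating sums over facet inclusions; by definition $\tilde{H}_l(\Sigma;k)=H_l(\tilde{C}_\bullet(\Sigma;k))$. Throughout one assumes, as is customary, that $\Delta_1,\Delta_2$ (hence $\Gamma=\Delta_1\cap\Delta_2$) contain the empty face, so that it is the reduced, rather than the unreduced, version that comes out.

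The key point is purely combinatorial: since $\Delta=\Delta_1\cup\Delta_2$, every face of $\Delta$ belongs to $\Delta_1$ or to $\Delta_2$, while $\Gamma$ consists exactly of those faces belonging to both. So for each $l$ I would write down the sequence
\begin{align*}
0 \longrightarrow \tilde{C}_l(\Gamma;k) \stackrel{\alpha_l}{\longrightarrow} \tilde{C}_l(\Delta_1;k)\oplus\tilde{C}_l(\Delta_2;k) \stackrel{\beta_l}{\longrightarrow} \tilde{C}_l(\Delta;k) \longrightarrow 0,
\end{align*}
where $\alpha_l$ sends the chain of a face $F$ to $(F,F)$ and $\beta_l$ sends $(F_1,F_2)$ to $F_1-F_2$, each via the chain maps induced by the relevant inclusions. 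Exactness is immediate: $\alpha_l$ is injective; $\beta_l$ is surjective because a face of $\Delta$ lies in $\Delta_1$ or $\Delta_2$; and if $(c_1,c_2)\in\Ker\beta_l$ then $c_1$ and $c_2$ represent the same chain inside $\Delta$, which forces both to be supported on faces of $\Delta_1\cap\Delta_2=\Gamma$ and to coincide there, i.e. $(c_1,c_2)=\alpha_l(c_1)$. Because the simplicial boundary of a face is given by the same alternating-sum formula in any ambient complex containing it, $\alpha_\bullet$ and $\beta_\bullet$ commute with the boundary operators, so this assembles into a short exact sequence of chain complexes.

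Finally I would invoke the long exact sequence in homology associated to a short exact sequence of complexes; specialized to these complexes it reproduces the displayed reduced Mayer-Vietoris sequence verbatim, with connecting map $\tilde{H}_l(\Delta;k)\to\tilde{H}_{l-1}(\Gamma;k)$ obtained in the usual way (lift a cycle, choose a $\beta$-preimage, take its boundary, pull back along $\alpha$). There is no serious obstacle here; the only things to watch are the sign conventions in $\alpha_l$ and $\beta_l$, chosen so they are genuine chain maps, and the handling of the augmentation term in degree $l=-1$ (equivalently, the standing assumption that $\Gamma$ is nonempty), which is precisely what distinguishes the reduced version from the unreduced one. For the detailed verification one may simply refer to \cite[Proposition 5.1.8]{HH1}.
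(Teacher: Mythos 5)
Your proof is correct and is precisely the standard argument: the paper offers no proof of its own but defers to \cite[Proposition 5.1.8]{HH1}, which derives the sequence in exactly this way from the short exact sequence of augmented oriented chain complexes $0 \to \tilde{C}_\bullet(\Gamma;k) \to \tilde{C}_\bullet(\Delta_1;k)\oplus\tilde{C}_\bullet(\Delta_2;k) \to \tilde{C}_\bullet(\Delta;k)\to 0$ and the zig-zag lemma. Your attention to the degree $-1$ term (the augmentation) is exactly the point that makes the sequence reduced rather than unreduced, so nothing is missing.
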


\begin{Lemma}\label{L0}(\cite[Lemma 2.5]{VV})
Let $x$ be a vertex of $G$ and $G^{'}=G \setminus N_G[x]$. Then $\Delta_{G^{'}}=lk_{\Delta_G}\{x\}$. In particular, $F$ is a facet of $\Delta_{G^{'}}$ if and only if $x \notin F$ and $F \cup \{x \}$ is a facet of $\Delta_G$.

\end{Lemma}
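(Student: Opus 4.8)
The plan is to prove the asserted equality of simplicial complexes by unwinding the two definitions at the level of faces, and then to read off the facet statement as the induced bijection on maximal faces. First I would record what membership in the link means: for $F \subseteq V(G)$ one has $F \in \lk_{\Delta_G}\{x\}$ exactly when $x \notin F$ and $F \cup \{x\}$ is an independent set of $G$. The condition ``$F \cup \{x\}$ independent'' separates into ``$F$ independent in $G$'' together with ``no vertex of $F$ is adjacent to $x$'', i.e.\ $F \cap N_G(x) = \emptyset$; combined with $x \notin F$ this is precisely $F \cap N_G[x] = \emptyset$ and $F \in \Delta_G$.

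Next I would compare this with $\Delta_{G'}$. Since $G' = G \setminus N_G[x]$ is the subgraph of $G$ induced on $V(G) \setminus N_G[x]$, its edges are exactly the edges of $G$ both of whose endpoints avoid $N_G[x]$; hence for any $F$ contained in $V(G) \setminus N_G[x]$, $F$ is independent in $G'$ if and only if it is independent in $G$. Therefore $F \in \Delta_{G'}$ if and only if $F \subseteq V(G) \setminus N_G[x]$ and $F \in \Delta_G$ --- literally the condition found above for $\lk_{\Delta_G}\{x\}$. This yields $\Delta_{G'} = \lk_{\Delta_G}\{x\}$.

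For the ``in particular'' part I would use that the map $F \mapsto F \cup \{x\}$ is an inclusion-preserving bijection from $\lk_{\Delta_G}\{x\}$ onto $\{H \in \Delta_G : x \in H\}$, so it carries maximal elements to maximal elements. In detail: if $F$ is a facet of $\Delta_{G'}$, then $F \cup \{x\} \in \Delta_G$, and any $H \in \Delta_G$ with $F \cup \{x\} \subsetneq H$ would contain some vertex $v \neq x$ not adjacent to $x$, whence $v \in V(G')$ and $F \cup \{v\}$ is independent in $G'$, contradicting maximality of $F$; so $F \cup \{x\}$ is a facet of $\Delta_G$ (and $x \notin F$ holds automatically). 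Conversely, if $F \cup \{x\}$ is a facet of $\Delta_G$ with $x \notin F$, then $F \in \Delta_{G'}$, and enlarging $F$ within $G'$ and re-adjoining $x$ would enlarge $F \cup \{x\}$ within $G$, again a contradiction. I do not foresee a genuine obstacle: the whole argument is bookkeeping with the definitions of independence complex, induced subgraph, and link; the only points demanding care are the identification of $E(G')$ with the edges of $G$ avoiding $N_G[x]$ and the matching of maximality under the bijection $F \leftrightarrow F \cup \{x\}$.
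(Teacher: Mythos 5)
Your proof is correct: the paper itself gives no argument for this lemma (it is quoted from \cite[Lemma 2.5]{VV}), and your direct verification --- identifying both $\Delta_{G'}$ and $\lk_{\Delta_G}\{x\}$ with the collection of independent sets $F$ of $G$ satisfying $F\cap N_G[x]=\emptyset$, then transporting maximality along the bijection $F\mapsto F\cup\{x\}$ --- is exactly the standard argument one would give. No gaps; the two points you flag as needing care (edges of the induced subgraph, and matching of maximal elements) are handled correctly.
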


The family of all unmixed bipartite graphs has
been characterized in a combinatorial way by Villarreal in the following result.
\begin{Theorem}\label{T1}(\cite[Theorem 1.1]{V1})
Let $G$ be a bipartite graph without an isolated
vertex. Then $G$ is unmixed if and only if there is a partition $V_1 = \{ x_1, \ldots , x_n \}$ and
$V_2 = \{ y_1, \ldots , y_n \}$ of vertices of $G$ such that
\begin{enumerate}
\item $\{ x_i ,y_i  \} \in E(G) $ for $1 \leq i \leq n$ and

\item if $\{ x_i, y_j \}$ and $\{ x_j, y_k \}$ are edges in $G$, for some distinct $i, j$ and $k$, then $\{ x_i,y_k \} \in E(G)$.
\end{enumerate}
\end{Theorem}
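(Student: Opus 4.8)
The plan is to recast unmixedness as a statement about the maximal independent sets of $G$ and then read off condition (2) by a short combinatorial argument. As recalled in the introduction, an unmixed bipartite graph without isolated vertices has a perfect matching; in a bipartite graph a perfect matching biject $V_1$ with $V_2$, so $|V_1|=|V_2|=:n$, and after relabelling we may assume $\{x_i,y_i\}\in E(G)$ for $1\le i\le n$. Thus condition (1) is automatic and the whole content is condition (2). Now use the standard duality: $C$ is a (minimal) vertex cover of $G$ if and only if $V(G)\setminus C$ is a (maximal) independent set. Since each of the $n$ edges $\{x_i,y_i\}$ shows that an independent set contains at most one of $x_i,y_i$, every maximal independent set has cardinality $\le n$, with equality precisely when it meets each pair $\{x_i,y_i\}$; and $V_1$ is itself a maximal independent set of size $n$. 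Hence \emph{$G$ is unmixed if and only if no maximal independent set of $G$ omits both $x_\ell$ and $y_\ell$ for some index $\ell$}, and the theorem reduces to this equivalence.

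For the implication ($\Leftarrow$), assume (1) and (2) and suppose, for contradiction, that a maximal independent set $S$ omits both $x_\ell$ and $y_\ell$. By maximality $S\cup\{x_\ell\}$ and $S\cup\{y_\ell\}$ are dependent, so (using that $G$ is bipartite) there are $y_j,x_k\in S$ with $\{x_\ell,y_j\}\in E(G)$ and $\{x_k,y_\ell\}\in E(G)$. Here $j\ne\ell$ and $k\ne\ell$ since $x_\ell,y_\ell\notin S$, and $j\ne k$, since otherwise $x_k=x_j\in S$ together with $y_j\in S$ would put the edge $\{x_j,y_j\}$ inside the independent set $S$. Applying condition (2) to the edges $\{x_k,y_\ell\}$ and $\{x_\ell,y_j\}$ with the distinct indices $k,\ell,j$ yields $\{x_k,y_j\}\in E(G)$, contradicting the independence of $S$ (both $x_k$ and $y_j$ lie in $S$). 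So no such $S$ exists and $G$ is unmixed.

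For the implication ($\Rightarrow$), suppose condition (2) fails: there are distinct $i,j,k$ with $\{x_i,y_j\},\{x_j,y_k\}\in E(G)$ but $\{x_i,y_k\}\notin E(G)$. Then $\{x_i,y_k\}$ is independent; extend it to a maximal independent set $\bar S$. Any independent set containing $x_i$ must avoid $y_j$ (because of the edge $\{x_i,y_j\}$) and any one containing $y_k$ must avoid $x_j$ (because of the edge $\{x_j,y_k\}$); hence $\bar S$ omits both $x_j$ and $y_j$, so $|\bar S|\le n-1<n=|V_1|$. Thus $G$ has maximal independent sets of two different sizes and is not unmixed, a contradiction. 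This establishes condition (2) and finishes the proof.

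The only genuinely delicate point is the preparatory bookkeeping: one must be sure that unmixedness really does supply the perfect matching (so that condition (1) can legitimately be assumed after relabelling), and then compress "unmixed" into the single clean statement that no maximal independent set misses a matched pair. Once that dictionary is in place, each direction is a one-line use of condition (2) or of its failure, with no computation to grind through. If one wished to avoid independent-set language altogether, the same two arguments run verbatim on minimal vertex covers — in the ($\Rightarrow$) direction one exhibits a minimal vertex cover of size $n+1$ containing both $x_j$ and $y_j$ — but the independent-set formulation is the most economical.
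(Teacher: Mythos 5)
The paper does not prove this statement; it is quoted verbatim from Villarreal's paper \cite[Theorem 1.1]{V1}, so there is no in-text argument to compare against. Your proof is correct and complete: the reduction of unmixedness to ``no maximal independent set misses a matched pair $\{x_\ell,y_\ell\}$'' is legitimate (both $V_1$ and the complements of minimal vertex covers are maximal independent sets, and each such set meets each matched pair at most once), the perfect matching needed for condition (1) is supplied by K\"onig's theorem exactly as recalled in the paper's introduction, and both directions are then clean applications of condition (2) or of its failure, with the index checks ($j\neq\ell$, $k\neq\ell$, $j\neq k$) done correctly. This is essentially Villarreal's own argument transported from minimal vertex covers to maximal independent sets via complementation, as you yourself note; nothing further is needed.
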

In this case, such a partition and ordering is called a pure order of $G$. As stated before, if $G$ is an unmixed bipartite graph on the vertex set $V (G) = \{ x_1, \ldots , x_n \} \cup \{y_1, \ldots , y_n \}$, then each of its minimal vertex covers has the form
$\{x_{i_1} , \ldots , x_{i_s} , y_{i_{s+1}}, \ldots , y_{i_n} \}$, where $\{ i_1, \ldots , i_n \} = [n]$.

\section{Almost Cohen-Macaulay Bipartite graphs}

We start this section by the following lemma.

\begin{Lemma}\label{L3}
Let  Let G be a bipartite graph with bipartition $V_1=\{x_1,\ldots, x_n\}$ and $V_2=\{y_1,\ldots,y_m\}$  and let $H=G \setminus N_{G}[x_{i_1}, \ldots,x_{i_s}]$ be subgraph of $G$. Then
\[ \Delta_{H}=\lk_{\Delta_{G}} \{ x_{i_1}, \ldots, x_{i_s}\} \]

\end{Lemma}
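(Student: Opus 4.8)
The plan is to reduce to the case $s=1$, which is precisely Lemma~\ref{L0}, by induction on $s$, using the elementary fact that an iterated link is the link of a union.

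First I would isolate the combinatorial input. Since $G$ is bipartite with parts $V_1,V_2$ and $x_{i_1},\dots,x_{i_s}\in V_1$, no two of the (distinct) vertices $x_{i_1},\dots,x_{i_s}$ are adjacent in $G$, so $\sigma:=\{x_{i_1},\dots,x_{i_s}\}$ is an independent set, i.e. a face of $\Delta_G$. Moreover $N_G[x_{i_1},\dots,x_{i_s}]=\bigcup_{j=1}^{s}N_G[x_{i_j}]$ by definition, and since $x_{i_2},\dots,x_{i_s}$ are neither equal nor adjacent to $x_{i_1}$, none of them lies in $N_G[x_{i_1}]$; hence they remain vertices of $G':=G\setminus N_G[x_{i_1}]$, with $N_{G'}[x_{i_j}]=N_G[x_{i_j}]\setminus N_G[x_{i_1}]$ for $2\le j\le s$. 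Consequently $H=G'\setminus N_{G'}[x_{i_2},\dots,x_{i_s}]$, that is, deleting closed neighborhoods one at a time gives the same induced subgraph as deleting them all at once, and $G'$ is again bipartite with $x_{i_2},\dots,x_{i_s}$ lying in one part. Now the induction runs as follows: the base case $s=1$ is Lemma~\ref{L0}; for $s>1$, Lemma~\ref{L0} applied to the vertex $x_{i_1}$ of $G$ gives $\Delta_{G'}=\lk_{\Delta_G}\{x_{i_1}\}$, the induction hypothesis applied to $G'$ and $x_{i_2},\dots,x_{i_s}$ gives $\Delta_H=\lk_{\Delta_{G'}}\{x_{i_2},\dots,x_{i_s}\}=\lk_{\lk_{\Delta_G}\{x_{i_1}\}}\{x_{i_2},\dots,x_{i_s}\}$, and the general identity $\lk_{\Delta}(F\cup F')=\lk_{\lk_{\Delta}(F)}(F')$, valid whenever $F\cup F'\in\Delta$ (here $\sigma\in\Delta_G$), yields $\Delta_H=\lk_{\Delta_G}\{x_{i_1},\dots,x_{i_s}\}$.

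The argument is essentially bookkeeping, and the only place that genuinely uses a hypothesis is the claim that one-at-a-time deletion of closed neighborhoods agrees with simultaneous deletion; this rests on $x_{i_1},\dots,x_{i_s}$ being independent, hence on $G$ being bipartite with all these vertices in a common part. The iterated-link identity is standard and needs only that $\sigma$ be a face of $\Delta_G$. A minor point worth noting is that passing to $G'$ may produce isolated vertices or even the empty graph, but both Lemma~\ref{L0} and the definition of the complex $\Delta_{(-)}$ behave correctly in these degenerate situations, so the induction goes through without change.
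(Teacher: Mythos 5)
Your proof is correct and follows essentially the same route as the paper: induction on $s$ with Lemma~\ref{L0} as the base case, combined with the iterated-link identity $\lk_{\Delta}(F\cup F')=\lk_{\lk_{\Delta}(F)}(F')$ (you peel off the first vertex where the paper peels off the last, which is immaterial). Your write-up is in fact slightly more careful than the paper's, since you explicitly justify that one-at-a-time deletion of closed neighborhoods agrees with simultaneous deletion and that $\{x_{i_1},\dots,x_{i_s}\}$ is a face of $\Delta_G$, both of which the paper uses tacitly.
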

\begin{proof}
We use induction on $s$. When $s = 1$, the result follows from Lemma \ref{L0}. Now suppose, inductively, that $s >1$ and the result has been proved for smaller values of $s$.
	Let $K=G\setminus N_{G}[x_{i_1},\ldots, x_{i_{s-1}} ]$, by induction hypothesis $\Delta_{K}=\lk_{\Delta_{G}} \{ x_{i_1}, \ldots, x_{i_{s-1}} \}$. Set $\Gamma=\lk_{\Delta_{G}} \{x_{i_1}, \ldots, x_{i_{s-1}} \} $. Since $ H= K \setminus N_{K}[ x_{i_s} ]$ and $\lk_{\Gamma}\{x_{i_{s}}\}=\lk_{\Delta_{G}}\{x_{i_1}, \ldots, x_{i_s} \}$, we have $ \Delta_{H}=\lk_{\Delta_{G}} \{ x_{i_1}, \ldots, x_{i_s} \}$.
\end{proof}

Let $I$ be a monomial ideal of $R$. We denote, as usual, by $G(I)$ the unique minimal set of monomial generators of $I$. If $I$ is generated in a single degree, then $I$ is said to be {\it polymatroidal} if for any two elements $u, v \in G(I)$ such that $\deg_{x_{i}}(v) < \deg_{ x_{i}}(u)$ there exists an index $j$ with $\deg_{x_{j}} (u) < \deg_{x_{j}} (v)$ such that $x_{j}(u/x_{i}) \in G(I)$. The polymatroidal ideal $I$ is called {\it matroidal} if $I$ is generated by square-free monomials (see \cite{HH1}).
\begin{Lemma}\label{L1}
Let $K_{m,n}$ be complete bipartite graph. Then $K_{m,n}$ is aCM if and only if $n \leq m \leq 2$.
\end{Lemma}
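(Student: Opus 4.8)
The plan is to reduce the statement to a single depth computation for an intersection of two linear primes. Write $R=k[x_1,\dots,x_m,y_1,\dots,y_n]$, put $I=I(K_{m,n})$, and set $\pp=(x_1,\dots,x_m)$ and $\qq=(y_1,\dots,y_n)$. Since every edge of $K_{m,n}$ runs between $V_1=\{x_1,\dots,x_m\}$ and $V_2=\{y_1,\dots,y_n\}$, the only minimal vertex covers are $V_1$ and $V_2$; equivalently the facets of $\Delta_{K_{m,n}}$ are $V_1$ and $V_2$, so $I=\pp\cap\qq$ with $\Ass(R/I)=\{\pp,\qq\}$, and moreover $\pp+\qq=\mm$ is the graded maximal ideal. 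Using $K_{m,n}\cong K_{n,m}$ I may assume $m\ge n\ge 1$. Then $\dim R/\pp=n$ and $\dim R/\qq=m$, hence $\dim R/I=\max\{m,n\}=m$.

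Next I would feed the short exact sequence $0\to R/I\to R/\pp\oplus R/\qq\to R/\mm\to 0$ (whose right-hand term is $k$) into the standard depth inequalities for a short exact sequence. Since $R/\pp\cong k[y_1,\dots,y_n]$ and $R/\qq\cong k[x_1,\dots,x_m]$ are polynomial rings, $\depth(R/\pp\oplus R/\qq)=\min\{n,m\}=n\ge 1$, whereas $\depth k=0$. Because the depth of the middle term strictly exceeds that of the right-hand term, the depth lemma forces $\depth R/I=\depth k+1=1$. (The same value could be obtained from the Auslander--Buchsbaum formula together with $\pd_R R/I=\pd_R k-1=m+n-1$, or from a Mayer--Vietoris/local-cohomology argument exhibiting an injection $k\hookrightarrow H^1_{\mm}(R/I)$, using that $\Delta_{K_{m,n}}$ is a disjoint union of two simplices and therefore has $\tilde H_0\neq 0$.)

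Finally, $K_{m,n}$ is aCM precisely when $\depth R/I\ge\dim R/I-1$, that is $1\ge m-1$, that is $m\le 2$; under the normalization $m\ge n\ge 1$ this is exactly $n\le m\le 2$. The only step with genuine content is the claim that $\depth R/I$ equals $1$ rather than something larger: the inequality $\depth R/I\ge 1$ is immediate because $\mm\notin\Ass(R/I)$, and the matching upper bound is exactly what the short exact sequence above supplies. Everything else — identifying the two minimal vertex covers, the dimension count, and the final arithmetic — is routine, so I expect no real obstacle beyond being careful with the equality case of the depth lemma.
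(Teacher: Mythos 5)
Your proof is correct, and its overall skeleton --- show that $\depth R/I(K_{m,n})=1$ unconditionally, compute the dimension, and compare --- matches the paper's. The difference is in how the depth is obtained. The paper observes that $I(K_{m,n})=(x_1,\dots,x_m)(y_1,\dots,y_n)$ is a transversal matroidal ideal generated in degree two and quotes Chiang-Hsieh's theorem to conclude $\depth R/I=1$; it then splits into the cases $\dim R/I=1$ (where it invokes the Estrada--Villarreal characterization of Cohen--Macaulay complete bipartite graphs to force $m=n=1$) and $\dim R/I=2$ (where the height count $\hte(I)=n+m-2\le n$ gives $m\le 2$). You instead identify the two minimal vertex covers $V_1$ and $V_2$, write $I=\pp\cap\qq$ with $\pp+\qq=\mm$, and extract $\depth R/I=\depth k+1=1$ from the short exact sequence $0\to R/I\to R/\pp\oplus R/\qq\to k\to 0$ via the equality case of the depth lemma (valid here since $\depth(R/\pp\oplus R/\qq)=n\ge 1>0$). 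Your route is more self-contained: the only nontrivial input is the depth lemma rather than an external result on matroidal ideals, the dimension $\dim R/I=m$ is read off directly instead of through height bookkeeping, and the same computation settles the converse direction that the paper dismisses as obvious. What the paper's route buys is that it places the lemma inside the general framework of (trans\-versal) matroidal ideals used elsewhere by the authors; what yours buys is a short, citation-free argument in which the single substantive claim, $\depth R/I=1$, is proved rather than imported.
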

\begin{proof}
$(\Longleftarrow).$ This is obvious.

$(\Longrightarrow). $
Let $K_{m,n}$ be aCM complete bipartite graph and $n \leq m $. So the edge ideal of $K_{m,n}$ is of the form $I(G)=(x_1, \ldots, x_m )(y_1, \ldots, y_n)$. $I(G)$ is transversal matroidal ideal of degree $2$. Therefore by \cite{C} we have $\depth(R/I)=1$. If $\dim(R/I)=1$, then $I(G)$ is CM and by \cite[Remark 2.2]{EV} we have $n=m=1$ . If $\dim(R/I)=2$, then $\hte(I(G))=n+m-2$. Since $\hte(I(G)) \leq n$, we have $n+m-2 \leq n$. Hence we must have $n \leq m \leq 2$.
\end{proof}

Estrada and Villarreal in \cite{EV} proved that if $G$ is a CM bipartite graph, then there is a vertex $v\in V(G)$ such that $deg(v)=1$ and also they proved that $G\setminus\{u\}$ is CM for some vertex $u\in V(G)$. Now we generalize that results.
\begin{Theorem}\label{T3}
Let $G$ be an aCM bipartite graph with bipartition $V_1= \{ x_1, \ldots, x_n \}$ and $V_2= \{ y_1, \ldots, y_m \}$. If $G$ has at least one vertex of positive degree, then there is a vertex $v \in V_1 \cup V_2$ so that $\deg(v) \leq 2$.
\end{Theorem}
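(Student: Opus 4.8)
The plan is to argue by contradiction. Since $G$ has no isolated vertices, the hypothesis just says that $G$ has an edge; suppose, toward a contradiction, that nevertheless $\deg(v)\geq 3$ for every $v\in V(G)$. In particular $V_1,V_2\neq\emptyset$ and every vertex of $V_2$ has degree at least $3$. The idea is to delete closed neighborhoods of vertices of $V_1$ one at a time, keeping the graph almost Cohen--Macaulay and keeping all surviving $V_2$-vertices of degree $\geq 3$, until the graph is forced to be a complete bipartite graph $K_{a,b}$ with $a\geq 3$ — which contradicts Lemma~\ref{L1}.

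Two facts drive the argument. First, Lemma~\ref{L3}: for any $S\subseteq V_1$ one has $\Delta_{G\setminus N_G[S]}=\lk_{\Delta_G}(S)$ (note that $S$ is automatically independent, hence a face of $\Delta_G$). Second, a link of an aCM Stanley--Reisner complex is again aCM: if $S$ is a face of a complex $\Delta$, then $\dim\lk_\Delta(S)\leq\dim\Delta-\abs{S}$ (since $F\cup S\in\Delta$ for every $F\in\lk_\Delta(S)$), while $\depth k[\lk_\Delta(S)]\geq\depth k[\Delta]-\abs{S}$; the latter follows from the Reisner-type description of depth via vanishing of reduced homology of all iterated links, using the identity $\lk_{\lk_\Delta(S)}(F)=\lk_\Delta(S\cup F)$. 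Combining the two bounds, $\depth k[\Delta]\geq\dim k[\Delta]-1$ forces $\depth k[\lk_\Delta(S)]\geq\dim k[\lk_\Delta(S)]-1$, so $G\setminus N_G[S]$ is aCM for every $S\subseteq V_1$. I also record a bookkeeping observation that makes the deletions safe: if $S\subseteq V_1$ and $y\in V_2$ is not deleted, then none of the neighbors of $y$ — all of which lie in $V_1$ — is deleted either, so $y$ keeps its full degree $\geq 3$ in $G\setminus N_G[S]$.

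Now run the following procedure. Keep a set $S\subseteq V_1$, initially $S=\emptyset$, and the graph $H:=G\setminus N_G[S]$; by the above, $\Delta_H$ is aCM, $V_2\cap V(H)\neq\emptyset$, and every vertex of $V_2\cap V(H)$ has degree $\geq 3$ in $H$. If some $x\in V_1\cap V(H)$ is not adjacent to all of $V_2\cap V(H)$, replace $S$ by $S\cup\{x\}\subseteq V_1$; then $H$ becomes $H\setminus N_H[x]=G\setminus N_G[S\cup\{x\}]$, which is still aCM, still has $V_2\cap V(H)\neq\emptyset$ (the chosen $x$ misses some vertex of $V_2$ that was present), still has every surviving $V_2$-vertex of degree $\geq 3$, and has strictly fewer vertices; return to the start of the procedure. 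Otherwise, every vertex of $V_1\cap V(H)$ is adjacent to every vertex of $V_2\cap V(H)$, so $H$ has no isolated vertices and is the complete bipartite graph $K_{a,b}$ with $a=\abs{V_1\cap V(H)}$ and $b=\abs{V_2\cap V(H)}\geq 1$; picking any $y\in V_2\cap V(H)$ gives $a=\deg_H(y)\geq 3$, so by Lemma~\ref{L1} $H$ is not aCM — a contradiction. Since the vertex count strictly drops each time we are in the first case, the procedure must eventually reach the second case, which finishes the proof.

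The step I expect to require the most care (or an explicit citation) is the preservation of the almost Cohen--Macaulay property under passing to a link: it needs the refined form of Reisner's criterion expressing $\depth k[\Delta]$ through reduced homology of links in all degrees, not merely the Cohen--Macaulay case. Everything else is routine bookkeeping: that deleting $N_G[S]$ with $S\subseteq V_1$ preserves the degrees of the surviving $V_2$-vertices; that the alternative ``some vertex of $V_1\cap V(H)$ misses part of $V_2\cap V(H)$'' versus ``$H$ is complete bipartite with no isolated vertex'' is exhaustive; and that the procedure cannot instead terminate with $V_2\cap V(H)=\emptyset$, since at each step we delete the closed neighborhood of a vertex that does \emph{not} dominate $V_2\cap V(H)$, so a $V_2$-vertex always survives.
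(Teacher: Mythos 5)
Your proof is correct, and although it ends where the paper's does --- at a complete bipartite graph with a part of size at least $3$, which Lemma \ref{L1} rules out --- the route is genuinely different. The paper fixes a vertex $x_n$ of minimal degree $s\geq 3$, invokes \cite[Corollary 2.4]{MN2} and the facet-size restriction coming from the aCM hypothesis to show that the other $s-1$ vertices of $V_1$ meeting $N_G(x_n)$ become isolated in $G\setminus N_G[x_n]$ and hence all share the neighbourhood $N_G(x_n)$, then takes the link of the complementary set of $y$'s to land exactly on $K_{s-1,s}$; this requires a case split on whether $\{x_n\}$ lies in a top-dimensional facet. You instead iterate a single cheap move: delete $N_G[x]$ for any surviving $x\in V_1$ that fails to dominate the surviving part of $V_2$. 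Your three bookkeeping observations are all sound --- such a deletion is the link of a face contained in $V_1$ (Lemma \ref{L3}), it never removes a neighbour of a surviving $V_2$-vertex (a $V_1$-vertex is deleted only if it lies in $S$, and then all its $V_2$-neighbours are deleted with it), and it always leaves alive the $V_2$-vertex that $x$ missed --- so termination forces $K_{a,b}$ with $a\geq 3$, and the complete bipartite case $G=K_{n,n+1}$ that the paper treats separately is absorbed automatically. What you gain is the elimination of the minimal-degree vertex, the unmixedness bookkeeping and the case analysis; what you must supply is the one nontrivial ingredient you flag, namely $\depth k[\lk_\Delta(S)]\geq\depth k[\Delta]-\abs{S}$ for a face $S$ of arbitrary cardinality. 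That inequality is true (it follows from the Hochster/Reisner-type description of $\depth k[\Delta]$ via reduced homology of all iterated links, or simply by iterating the one-vertex statement \cite[Corollary 3.6]{MN2} that the paper itself relies on), so with that citation made explicit your argument is complete.
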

\begin{proof}
Since $G$ is aCM bipartite graph, by \cite[Corollary 2.4]{MN2} we may assume that $m-1 \leq n \leq m$. First assume $n=m-1$.
We proceed by contradiction. Assume $\deg(v) \geq 3$ for all $v \in V_1 \cup V_2$. Let $v$ be a vertex of minimal degree. We may assume $v = x_n$. If $\deg(x_n) = n+1$, then $G=K_{n,n+1}$ is a complete bipartite graph, but this is impossible by Lemma \ref{L1}.
Therefore we may assume that $3 \leq \deg(x_n) \leq n$. Set $s = \deg(x_n)$. Let $N_{G}(x_n)=\{ y_{n+1}, \ldots, y_{n-s+2} \}$. Let $\Delta $ be the simplicial complex of independent sets of $G$, by Lemma \ref{L3} we have $\Gamma= \lk_{\Delta} \{x_n \}$ is the simplicial complex of independent sets of $G\setminus N_{G}[x_{n} ] $. Since $\Delta$ is aCM,  by \cite[Theorem 3.4]{MN2} $\Gamma$ is C-M for $\{x_n\} \in \Delta \setminus \Delta(n)$ and $\Gamma$ is aCM for $\{x_n\} \in \Delta(n)$.

 If $ \{x_n\} \in \Delta(n)$, then $\dim \Gamma =n-1$. Therefore we have $n-1 \leq |F| \leq n$ for all facet $F$ in $\Gamma$. We claim $x_{n-s+2}, \ldots, x_{n-1}$ are isolated vertex in $G \setminus  N_{G}[x_n]$. By contrary we assume one of them is not isolated vertex in $G \setminus  N_{G}[x_n]$, say $x_{n-1}$. So $\{ x_{n-1}, y_j \} \in E(G)$ for some $1\leq j \leq n-s+1$. Therefore $\{ y_1, \ldots, y_{n-s+1}, x_{n-s+2}, \ldots, x_{n-2} \}$ is maximal facet of bipartite graph $G\setminus N_{G}[x_{n} ] $, and this is contradiction, since $|\{ y_1, \ldots, y_{n-s+1}, x_{n-s+2}, \ldots, x_{n-2} \} |=n-2$. So $\deg(x_{n-s+2})= \ldots= \deg(x_{n-1})=s$ and $N_{G}(x_{n-s+2})= \ldots=N_{G}(x_{n-1}) =\{ y_{n+1}, \ldots, y_{n-s+2} \}$. Consider the graph $H = G \setminus  N_{G}[ y_1, \ldots, y_{n-s+1} ]$. $H$ is aCM bipartite graph, since by Lemma \ref{L3} $\lk_{\Delta} \{ y_1, \ldots, y_{n-s+1} \}$ is the simplicial complexs of independent sets of $H$ but this is impossible because $H=K_{s-1,s}$.

 If $\{x_{n}\} \notin \Delta(n) $, then $\dim \Gamma =n-2$. Since $\Gamma$ is CM, we have $|F| = n-1$ for all facet $F$ in $\Gamma$. We claim $x_{n-s+2}, \ldots, x_{n-1}$ are isolated vertex in $G \setminus  N_{G}[x_n]$.  By contrary assume one of them is not isolated vertex in $G \setminus  N_{G}[x_n]$, say $x_{n-1}$. So $\{ x_{n-1}, y_j \} \in E(G)$ for some $1\leq j \leq n-s+1$. Therefore $\{ y_1, \ldots, y_{n-s+1}, x_{n-s+2}, \ldots, x_{n-2} \}$ and $\{ x_1, \ldots, x_{n-1}\}$ are maximal facet of bipartite graph $G\setminus N_{G}[x_{n} ]$, and this is contradiction. So $\deg(x_{n-s+2})= \ldots= \deg(x_{n-1})=s$ and $N_{G}(x_{n-s+2})= \ldots=N_{G}(x_{n-1}) =\{ y_{n+1}, \ldots, y_{n-s+2} \}$. Consider the graph $H = G \setminus  N_{G}[ y_1, \ldots, y_{n-s+1} ]$. $H$ is aCM bipartite graph, since by Lemma \ref{L3} $\lk_{\Delta} \{ y_1, \ldots, y_{n-s+1} \}$ is the simplicial complexs of independent sets of $H$ but this is impossible because $H=K_{s-1,s}$. If $|V_1|=|V_2|=m=n$, then the proof is exactely the same of the above  arguments.
\end{proof}

\begin{Corollary}
Let $G$ be an aCM bipartite graph. Let $u$ be a vertex of degree one
of $G$ and $v$ its adjacent vertex. Then $G \setminus \{ v\}$ is aCM.

\end{Corollary}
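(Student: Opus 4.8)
The plan is to pass to the link of the degree-one vertex $u$ inside $\Delta_G$, and then to recover the conclusion for $G\setminus\{v\}$ from the cone structure that appears once $v$ is deleted.

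First I would record that $\{u\}$ is a face of $\Delta_G$ and that $N_G[u]=\{u,v\}$, since $\deg(u)=1$ with $v$ the unique neighbour of $u$. Hence the case $s=1$ of Lemma \ref{L3} (equivalently Lemma \ref{L0}) gives
\[
\lk_{\Delta_G}\{u\}=\Delta_{G\setminus N_G[u]}=\Delta_{G\setminus\{u,v\}}.
\]
Since $\Delta_G$ is aCM, I would then apply \cite[Theorem 3.4]{MN2}, exactly as in the proof of Theorem \ref{T3}: for the vertex $\{u\}\in\Delta_G$, the link $\lk_{\Delta_G}\{u\}$ is Cohen-Macaulay if $\{u\}\notin\Delta_G(n)$ and almost Cohen-Macaulay if $\{u\}\in\Delta_G(n)$. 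In either case $\lk_{\Delta_G}\{u\}$ is aCM; equivalently, if $R''$ denotes the polynomial ring on the vertex set $V(G)\setminus\{u,v\}$, then $R''/I(G\setminus\{u,v\})=K[\lk_{\Delta_G}\{u\}]$ satisfies $\depth\geq\dim-1$.

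Next I would transfer this back to $G\setminus\{v\}$. In $G\setminus\{v\}$ the vertex $u$ has become isolated, so an independent set of $G\setminus\{v\}$ is exactly an independent set of $G\setminus\{u,v\}$, optionally enlarged by $u$; that is, $\Delta_{G\setminus\{v\}}$ is the cone over $\Delta_{G\setminus\{u,v\}}$ with apex $u$. Writing $R'$ for the polynomial ring on $V(G)\setminus\{v\}$, this means $R'/I(G\setminus\{v\})\cong\bigl(R''/I(G\setminus\{u,v\})\bigr)[u]$. Adjoining one polynomial variable raises both depth and Krull dimension by exactly $1$, so the inequality $\depth\geq\dim-1$ is inherited, and $G\setminus\{v\}$ is aCM.

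The link identification and the cone/polynomial-extension bookkeeping are routine. The only point requiring care is the invocation of \cite[Theorem 3.4]{MN2} — guaranteeing that the link of a vertex of an aCM complex is again at worst aCM — together with the harmless observation that the now-isolated vertex $u$ does not disturb the aCM property; I do not expect a genuine obstacle beyond keeping these case distinctions straight.
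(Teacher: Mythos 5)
Your argument is correct and follows essentially the same route as the paper: identify $\lk_{\Delta_G}\{u\}$ with $\Delta_{G\setminus\{u,v\}}$ via Lemma \ref{L3}, invoke the result of \cite{MN2} that links in an aCM complex are aCM, and observe that $\Delta_{G\setminus\{v\}}$ is the cone over this link with apex $u$. Your explicit justification of the last step (adjoining a polynomial variable raises depth and dimension by one) is a welcome expansion of what the paper leaves implicit.
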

\begin{proof}
Let $H=G\setminus \{ u, v \}$ be subgraph of $G$, by Lemma \ref{L3} we have $\lk_{\Delta} \{u \}$ is the
simplicial complexs of independent sets of $H$. By \cite[Corollary 3.6]{MN2}, $\lk_{\Delta} \{u \}$ is aCM. If $\Gamma = \langle F_1, \ldots, F_r \rangle$ be simplicial complex of independent set of $G\setminus\{ v\}$, then $ F_i=G_i \cup \{u \}$ such that $G_i$ is a facet of $\lk_{\Delta} \{u \}$. Hence $G\setminus\{ v\}$ is aCM.
\end{proof}

\begin{Corollary}
Let $G$ be an aCM bipartite graph. Let $u$ be a vertex of degree two
of $G$ and $v, w$ its adjacent vertex. then $G \setminus \{ v, w\}$ is aCM.

\end{Corollary}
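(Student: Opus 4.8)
The plan is to run, almost verbatim, the argument of the preceding corollary, with the leaf $u$ there replaced by our degree-two vertex $u$. Since $\deg(u)=2$ with $N_G(u)=\{v,w\}$, we have $N_G[u]=\{u,v,w\}$, so $G\setminus N_G[u]=G\setminus\{u,v,w\}=:H$. By Lemma~\ref{L0} (the case $s=1$ of Lemma~\ref{L3}), the complex $\lk_{\Delta_G}\{u\}$ is exactly $\Delta_H$, the independent-set complex of $H$. Because $\Delta_G$ is aCM, \cite[Corollary 3.6]{MN2} then gives that $\lk_{\Delta_G}\{u\}$ is aCM (if $H$ has no vertices the assertion is trivial).

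Next I would identify $\Delta_{G\setminus\{v,w\}}$ with a cone, exactly as in the degree-one case where $\Delta_{G\setminus\{v\}}$ is the cone over $\lk_{\Delta_G}\{u\}$. In the graph $G\setminus\{v,w\}$ the vertex $u$ has degree zero, since its only neighbours in $G$ were $v$ and $w$; hence, for $F\subseteq V(G)\setminus\{v,w\}$, the set $F$ is independent in $G\setminus\{v,w\}$ if and only if $F\setminus\{u\}$ is independent in $G\setminus\{u,v,w\}$. Consequently, if $\lk_{\Delta_G}\{u\}=\langle G_1,\ldots,G_r\rangle$, then $\Delta_{G\setminus\{v,w\}}=\langle G_1\cup\{u\},\ldots,G_r\cup\{u\}\rangle$; that is, $\Delta_{G\setminus\{v,w\}}$ is the cone over $\lk_{\Delta_G}\{u\}$ with apex $u$.

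Finally, coning by a fresh vertex preserves the almost Cohen--Macaulay property, which is precisely the step used implicitly at the end of the degree-one corollary: $K[\Delta_{G\setminus\{v,w\}}]\cong K[\lk_{\Delta_G}\{u\}][x_u]$, a polynomial extension, so both $\depth$ and $\dim$ increase by one and the inequality $\depth\geq\dim-1$ passes from $\lk_{\Delta_G}\{u\}$ to $\Delta_{G\setminus\{v,w\}}$. Hence $G\setminus\{v,w\}$ is aCM. The only points that need care are checking that the hypotheses of \cite[Corollary 3.6]{MN2} genuinely apply to the vertex $u$ here, just as in the degree-one case, and the harmless bookkeeping of which vertices (and corresponding variables) survive once $u$ has become isolated; the coning step itself is routine.
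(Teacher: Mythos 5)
Your proposal is correct and follows essentially the same route as the paper: pass to $H=G\setminus N_G[u]=G\setminus\{u,v,w\}$, identify $\Delta_H$ with $\lk_{\Delta_G}\{u\}$ via Lemma \ref{L3}, invoke \cite[Corollary 3.6]{MN2} for the aCM property of the link, and observe that $\Delta_{G\setminus\{v,w\}}$ is the cone over this link with apex $u$. You even make explicit the final step (coning preserves aCM via the polynomial-extension isomorphism) that the paper leaves implicit.
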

\begin{proof}
Let $H=G\setminus \{ u, v, w \}$ be subgraph of $G$, by Lemma \ref{L3} we have $\lk_{\Delta} \{u \}$ is the
simplicial complexs of independent sets of $H$. By \cite[Corollary 3.6]{MN2}, $\lk_{\Delta} \{u \}$ is aCM. If $\Gamma = \langle F_1, \ldots, F_r \rangle$ be simplicial complex of independent set of $G\setminus \{v, w \}$, then $ F_i=G_i \cup \{u \}$ such that $G_i$ is a facet of $\lk_{\Delta} \{u \}$. Hence $G \setminus \{ v, w\}$ is aCM.
\end{proof}

\begin{Lemma}\label{L2}
Let $G$ be an unmixed bipartite graph with bipartition $V_1= \{ x_1, \ldots, x_n \}$ and $V_2= \{ y_1, \ldots, y_n \}$ and let $N_{G}(x_{n}) =\{ y_{n}, \ldots, y_{n-i+1} \}$. Then $\lk_{\Delta_G} \{y_{n}, \ldots, y_{n-i+1} \}$ is subsimplicial complex of $\lk_{\Delta_G} \{ x_n \}$.
\end{Lemma}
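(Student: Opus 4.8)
The plan is to verify the claimed containment directly on the level of faces of the independence complex, without invoking homology. First I would observe that $\{y_n, \ldots, y_{n-i+1}\}$ is contained in $V_2$, hence is an independent set of $G$ and a genuine face of $\Delta_G$, so $\lk_{\Delta_G}\{y_n, \ldots, y_{n-i+1}\}$ is defined; similarly $\{x_n\}$ is a face of $\Delta_G$ (here $\deg(x_n) = i \geq 1$ since $G$ has no isolated vertices). Since the two links are complexes on different vertex sets, the assertion ``is a subsimplicial complex of'' is to be read as: every face of $\lk_{\Delta_G}\{y_n, \ldots, y_{n-i+1}\}$ is also a face of $\lk_{\Delta_G}\{x_n\}$.

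Then I would take an arbitrary face $F \in \lk_{\Delta_G}\{y_n, \ldots, y_{n-i+1}\}$. By the definition of link, $F \cap \{y_n, \ldots, y_{n-i+1}\} = \emptyset$ and $F \cup \{y_n, \ldots, y_{n-i+1}\}$ is independent in $G$; in particular $F$ is independent. The goal is to show $x_n \notin F$ and $F \cup \{x_n\} \in \Delta_G$. For the first point, choose some $y_j \in N_G(x_n) = \{y_n, \ldots, y_{n-i+1}\}$; since $\{x_n, y_j\} \in E(G)$ and $y_j$ belongs to the independent set $F \cup \{y_n, \ldots, y_{n-i+1}\}$, the vertex $x_n$ cannot belong to that set, so $x_n \notin F$. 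For the second point, if $F \cup \{x_n\}$ were not independent then, since $F$ is independent, there is an edge $\{x_n, f\}$ with $f \in F$, which forces $f \in N_G(x_n) = \{y_n, \ldots, y_{n-i+1}\}$, contradicting $F \cap \{y_n, \ldots, y_{n-i+1}\} = \emptyset$. Hence $F \cup \{x_n\}$ is independent, i.e. $F \in \lk_{\Delta_G}\{x_n\}$, which finishes the proof.

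An equivalent, more structural phrasing uses Lemma \ref{L3} (and its mirror obtained by interchanging the roles of $V_1$ and $V_2$) together with Lemma \ref{L0}: these identify $\lk_{\Delta_G}\{y_n, \ldots, y_{n-i+1}\}$ with $\Delta_H$ for $H = G \setminus N_G[y_n, \ldots, y_{n-i+1}]$ and $\lk_{\Delta_G}\{x_n\}$ with $\Delta_{G'}$ for $G' = G \setminus N_G[x_n]$; one then notes that $N_G[x_n] = \{x_n\} \cup \{y_n, \ldots, y_{n-i+1}\} \subseteq N_G[y_n, \ldots, y_{n-i+1}]$, so $H$ arises from $G'$ by deleting additional vertices, and deleting vertices only shrinks the independence complex. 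I do not expect any genuine obstacle: the only points that deserve an explicit line are that $\{y_n, \ldots, y_{n-i+1}\}$ really is a face (immediate from bipartiteness) and the bookkeeping that the two links have different ambient vertex sets, so the claim is an inclusion of face sets rather than an identity of complexes on a common vertex set.
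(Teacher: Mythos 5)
Your proof is correct and follows essentially the same route as the paper's: both take an arbitrary face $F$ of $\lk_{\Delta_G}\{y_n,\ldots,y_{n-i+1}\}$ and verify directly, using $N_G(x_n)=\{y_n,\ldots,y_{n-i+1}\}$, that $x_n\notin F$ and $F\cup\{x_n\}$ is independent (the paper phrases this via the subgraphs $H=G\setminus N_G[x_n]$ and $K=G\setminus N_G[y_n,\ldots,y_{n-i+1}]$ from Lemma \ref{L3}, which is exactly your alternative structural phrasing). Your write-up is, if anything, slightly more careful about the point that the two links live on different vertex sets.
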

\begin{proof}
Let $H=G\setminus N_{G}[x_{n} ]$ and $K=G\setminus N_{G}[y_{n}, \ldots, y_{n-i+1}]$ be subgraphs of $G$, by Lemma \ref{L3}, we have $\lk_{\Delta_G} \{x_n \}$ and $\lk_{\Delta_G} \{y_{n}, \ldots, y_{n-i+1} \}$ are the
simplicial complexs of independent sets of $H$ and $K$ respectively.
If $F \in \lk_{\Delta_G}\{y_{n}, \ldots, y_{n-i+1} \}$, then $y_j \notin F$ for all $n-i+1 \leq  j \leq n$ and $F \cup \{y_{n}, \ldots, y_{n-i+1}\} \in {\Delta_G}$. This implies that $F \cup \{ y_{n}, \ldots, y_{n-i+1} \}$ is an independent set of $G$. So $(F \cup \{ y_{n}, \ldots, y_{n-i+1} \}) \cap N_{G}(\{y_{n}, \ldots, y_{n-i+1} \}) = \emptyset$. But this means that $ F \subseteq V(K) \subseteq V(H)$ because $V(K)=V(G) \setminus N_{G}[y_{n}, \ldots, y_{n-i+1}]$. Since $\{y_{n}, \ldots, y_{n-i+1} \} =N_{G}(x_n)$ and $x_n \in N_{G}(y_{n}, \ldots, y_{n-i+1})$, we have $(F \cup N_{G}(x_n)) \cap \{x_n \}= \emptyset$. Thus $(F \cup \{x_n \}) \cap N_{G}(x_n)= \emptyset$. Hence $F \in \lk_{\Delta_G}\{x_n \}$.
\end{proof}

\begin{Lemma}\label{L6}
Let $G$ be an unmixed bipartite graph with pure order of vertices $\{x_1, \ldots , x_n \} \cup \{y_1, \ldots , y_n \}$ and let $N_G(x_i)=\{ y_i,y_{i_1}, \ldots, y_{i_{r_i}} \}$. Then
\begin{enumerate}
\item[(i)] $G\setminus N_{G}[x_i]$ is unmixed bipartite subgraph of $G$.
\item[(ii)] $x_{i_1}, \ldots, x_{i_{r_i}}$ are isolated vertices in $G\setminus N_{G}[x_i]$.
\end{enumerate}
In particular if $x_i$ is a vertex of minimal degree, then
\begin{enumerate}
\item[(iii)]  $G\setminus N_{G}[ \{ y_1, \ldots, y_n \} \setminus \{ y_i, y_{i_1}, \ldots, y_{i_{r_i}} \}]$ is complete bipartite graph with bipartition
$\{x_i, x_{i_1}, \ldots, x_{i_{r_i}} \} \cup \{ y_i, y_{i_1}, \ldots, y_{i_{r_i}} \}$
\item[(iv)] $N_G(y_i)=N_G(y_{i_1})= \ldots = N_G(y_{i_{r_i}})$
\end{enumerate}

\end{Lemma}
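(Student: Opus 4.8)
The plan is to extract one consequence of Villarreal's criterion and push it through all four parts. Write $N_G(x_i)=\{y_i,y_{i_1},\dots,y_{i_{r_i}}\}$, so $\deg(x_i)=r_i+1=:d$, and set $S=\{i,i_1,\dots,i_{r_i}\}$, $Y':=N_G(x_i)=\{y_j:j\in S\}$. The key observation $(\star)$ is: if $j\in S$ then $N_G(x_j)\subseteq N_G(x_i)$. This is trivial for $j=i$; for $j=i_k$ one takes $y_\ell\in N_G(x_j)$, disposes of the cases $\ell\in\{i,j\}$ directly (then $y_\ell\in N_G(x_i)$ already, since $y_i\in N_G(x_i)$ and $y_j=y_{i_k}\in N_G(x_i)$), and in the remaining case applies condition (2) of Theorem \ref{T1} to the edges $\{x_i,y_j\}$ and $\{x_j,y_\ell\}$ — with $i,j,\ell$ now pairwise distinct — to conclude $\{x_i,y_\ell\}\in E(G)$, i.e. $y_\ell\in N_G(x_i)$.

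Part (ii) is then immediate: $N_G[x_i]=\{x_i\}\cup N_G(x_i)$ contains all of $N_G(x_{i_k})$ by $(\star)$, so $x_{i_k}$ has no neighbour left in $G\setminus N_G[x_i]$. For part (i), put $B=[n]\setminus S$. Tracking the vertex set one sees that $G\setminus N_G[x_i]$ is the disjoint union of the isolated vertices $x_{i_1},\dots,x_{i_{r_i}}$ (isolated by (ii)) and the induced subgraph $G''$ of $G$ on $\{x_j,y_j:j\in B\}$; since isolated vertices lie in no minimal vertex cover, it suffices to show $G''$ is unmixed bipartite. I would do this through Theorem \ref{T1} with the relabelled partition $\{x_j:j\in B\}\cup\{y_j:j\in B\}$: condition (1) holds because each $\{x_j,y_j\}$ with $j\in B$ is an edge of $G$ with both endpoints in $G''$ (so $G''$ has no isolated vertex either), and condition (2) is inherited from $G$ because the conclusion edge $\{x_j,y_\ell\}$ it produces again has both endpoints in $G''$ once $j,\ell\in B$.

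For (iii) and (iv), add the hypothesis that $\deg(x_i)$ is minimal, so every vertex has degree $\ge d$. By $(\star)$, $N_G(x_j)\subseteq Y'$ for every $j\in S$, and since $\lvert Y'\rvert=d\le\deg(x_j)$ this inclusion is forced to be an equality: $N_G(x_j)=Y'$ for all $j\in S$. This single identity gives both statements. For (iii), with $W=\{y_1,\dots,y_n\}\setminus Y'$, an $x$-vertex $x_j$ survives in $G\setminus N_G[W]$ precisely when $N_G(x_j)\cap W=\emptyset$; for $j\notin S$ this fails (as $x_j$ is adjacent to $y_j\in W$) while for $j\in S$ it holds ($N_G(x_j)=Y'$, which is disjoint from $W$), and the surviving $y$-vertices are exactly those of $Y'$; since every $x_j$ with $j\in S$ is joined to all of $Y'$, the induced subgraph on $\{x_j:j\in S\}\cup Y'$ is the complete bipartite graph $K_{d,d}$ with the stated bipartition. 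For (iv), the identity $N_G(x_j)=Y'$ ($j\in S$) already gives $\{x_j:j\in S\}\subseteq N_G(y_\ell)$ for every $y_\ell\in Y'$; to compare the two neighbourhoods on an outside vertex $x_j$ ($j\notin S$) one uses condition (2) to chain $\{x_j,y_i\}$ with $\{x_i,y_{i_k}\}$ and, conversely, $\{x_j,y_{i_k}\}$ with $\{x_{i_k},y_i\}$, obtaining $x_j\in N_G(y_i)\Longleftrightarrow x_j\in N_G(y_{i_k})$; hence $N_G(y_i)=N_G(y_{i_1})=\cdots=N_G(y_{i_{r_i}})$.

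There is no serious obstacle here; the work lies in (a) isolating $(\star)$ from condition (2) of Theorem \ref{T1} and checking that the three indices fed into condition (2) really are pairwise distinct (the matching edges $\{x_j,y_j\}$ are the only cases to separate off), and (b) in parts (i)–(ii), bookkeeping the isolated vertices $x_{i_1},\dots,x_{i_{r_i}}$ precisely so that the reduction to Theorem \ref{T1} is legitimate. Minimality of $\deg(x_i)$ is used only in (iii)–(iv), solely to promote the inclusion $N_G(x_j)\subseteq Y'$ to the equality $N_G(x_j)=Y'$.
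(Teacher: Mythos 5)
Your proof is correct, and for parts (i) and (ii) it takes a genuinely different route from the paper. The paper proves (i) abstractly: $\Delta_{G\setminus N_G[x_i]}=\lk_{\Delta_G}\{x_i\}$ (Lemma \ref{L0}/\ref{L3}) and the link of a face of a pure complex is pure; it then deduces (ii) \emph{from} (i) by exhibiting two minimal vertex covers of $G\setminus N_G[x_i]$ of different cardinalities ($\{y_k: k\notin S\}$ of size $n-r_i-1$ versus $\{x_k: k\notin S\}\cup\{x_j\}$ of size $n-r_i$) if some $x_{i_k}$ were not isolated. You instead extract the inclusion $(\star)$ $N_G(x_j)\subseteq N_G(x_i)$ for $j\in S$ directly from condition (2) of Theorem \ref{T1} (with the needed care about distinctness of the three indices), get (ii) immediately, and then verify Villarreal's criterion by hand on the induced subgraph $G''$ on $\{x_j,y_j: j\in B\}$ after stripping the isolated vertices. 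Your version is more elementary and self-contained (no simplicial-complex machinery), and it is actually more careful on two points the paper glosses over: the isolated vertices of $G\setminus N_G[x_i]$, which would obstruct a direct appeal to Theorem \ref{T1}, and the distinctness hypothesis in condition (2); the trade-off is that your (ii) genuinely needs the pure order, whereas the paper's vertex-cover argument needs only unmixedness of the subgraph. For (iii) and (iv) your argument coincides with the paper's: minimality of $\deg(x_i)$ upgrades $(\star)$ to the equality $N_G(x_j)=N_G(x_i)$ for $j\in S$, which yields the complete bipartite graph, and (iv) is the same chaining of condition (2), done in both directions where the paper writes only one.
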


\begin{proof}
$(i)$ Since $\lk_{\Delta_G} \{x_i \}$ is the simplicial complexs of independent sets of $G\setminus N_{G}[x_i]$ and any link of a pure complex is pure, we have $G\setminus N_{G} [x_i]$ is unmixed bipartite subgraph of $G$.\\
$(ii)$ If $x_j$ for some $ i_1 \leq j \leq i_{r_i}$ is not isolated in $G\setminus N_{G}[x_i]$,
then there exists an integer $k \in \{ 1, \ldots, n \} \setminus \{ i_1, \ldots, i_{r_i} \}$ such that $x_j y_k \in E(G\setminus N_{G}[x_i])$.
Therefore $\{ y_1, \ldots, y_n \} \setminus \{ y_i, y_{i_1}, \ldots, y_{i_{r_i}} \}$ is
a minimal vertex cover for $G\setminus N_{G}[x_i]$ and there is also a minimal vertex cover for $G\setminus N_{G}[x_i]$ containing $(\{ x_1, \ldots, x_n \} \setminus \{ x_i, x_{i_1}, \ldots, x_{i_{r_i}} \})\cup \{x_j \} $, which is a contradiction since $G\setminus N_{G}[x_i]$ is unmixed. \\
$(iii)$ Since $x_{i_1}, \ldots, x_{i_{r_i}} $ are isolated vertices in $G\setminus N_{G}[x_i]$, we have $\deg x_{i_j} \leq r_{i} +1$ and $N_G(x_{i_j}) \subseteq N_G(x_i)$ for all $i_1 \leq j \leq i_{r_{i}}$. By hypothesis $x_i$ is a vertex of minimal degree, therefore  $\deg x_{i_j} \geq r_{i} +1$ for all $i_1 \leq j \leq i_{r_{i}}$. Hence $G\setminus N_{G}[ \{ y_1, \ldots, y_n \} \setminus \{ y_i, y_{i_1}, \ldots, y_{i_{r_i}} \}]$ is complete bipartite graph with bipartition
 $\{x_i, x_{i_1}, \ldots, x_{i_{r_i}} \} \cup \{ y_i, y_{i_1}, \ldots, y_{i_{r_i}} \}$.\\
 $(iv)$ Let  $x_k \in N_G(y_{i_j})$ for some $ j \in \{ i, i_1, \ldots, i_{r_i} \}$, say $x_k \in N_G(y_{i_1})$. Therefore $ \{x_k , y_{i_1} \} \in E(G)$. Since $N_G(x_{i_1})=\{ y_i, y_{i_1}, \ldots, y_{i_{r_i}} \}$, by pure order of vertices we have $ \{x_k , y_{i_j} \} \in E(G)$ for all $ j \in \{ i, i_1, \ldots, i_{r_i} \}$. Hence $N_G(y_i)=N_G(y_{i_1})= \ldots = N_G(y_{i_{r_i}})$ .

\end{proof}

\begin{Lemma}\label{L7}
Let $G$ be an unmixed bipartite graph with pure order of vertices $\{x_1, \ldots , x_n \} \cup \{y_1, \ldots , y_n \}$ and let $H=G\setminus N_{G}[x_{i} ]$ and $K=H \setminus N_G[\{x_{i_1},\ldots, x_{i_{r_i}}\}]$ be subgraphs of $G$ such that $x_i$ is a vertex of minimal degree and $N_G(x_i)=\{ y_i,y_{i_1}, \ldots, y_{i_{r_i}} \}$. Then $F$ is a facet
of $\Delta_{G}$ if and only if  $F$ can be written as
\begin{enumerate}
\item $F= F^{\prime} \cup \{x_{i} \}$, where $F^{\prime}$ is a facet of $\Delta_H$.
\item $F= F^{\prime\prime} \cup \{y_{i},y_{i_1}, \ldots, y_{i_{r_i}} \}$, where $F^{\prime \prime}$ is a facet of $\Delta_K$.
\end{enumerate}
\end{Lemma}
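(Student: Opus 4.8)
The plan is to read off both directions from a single case analysis of a facet $F$ of $\Delta_G$ — equivalently, a maximal independent set of $G$ — according to whether or not $x_i\in F$; concretely one shows that $\Delta_G$ is the union of the star of $x_i$ and the star of $\{y_i,y_{i_1},\ldots,y_{i_{r_i}}\}$, and then identifies the facets of each of these two stars. Before that I would pin down the two complexes on the right. By Lemma \ref{L3}, $\Delta_H=\lk_{\Delta_G}\{x_i\}$. By Lemma \ref{L6}(ii) the vertices $x_{i_1},\ldots,x_{i_{r_i}}$ are isolated in $H$ and, by (iii), each has neighbourhood $N_G(x_i)=\{y_i,y_{i_1},\ldots,y_{i_{r_i}}\}$; hence $N_G[\{x_{i_1},\ldots,x_{i_{r_i}}\}]$ adds to $\{x_{i_1},\ldots,x_{i_{r_i}}\}$ only vertices already removed in passing from $G$ to $H$, so $K$ is precisely $H$ with the isolated vertices $x_{i_1},\ldots,x_{i_{r_i}}$ deleted. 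Thus $\Delta_K$ is the independence complex of the induced subgraph of $G$ on $V(K)=V(G)\setminus\bigl(\{x_i,x_{i_1},\ldots,x_{i_{r_i}}\}\cup\{y_i,y_{i_1},\ldots,y_{i_{r_i}}\}\bigr)$, and combining Lemma \ref{L3} with Lemma \ref{L6}(iii)--(iv) one identifies $\Delta_K=\lk_{\Delta_G}\{y_i,y_{i_1},\ldots,y_{i_{r_i}}\}$, which by Lemma \ref{L2} sits inside $\Delta_H$.

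The first step is the dichotomy. If $F$ is a facet of $\Delta_G$ and $x_i\notin F$, then by maximality $F$ meets $N_G(x_i)=\{y_i,y_{i_1},\ldots,y_{i_{r_i}}\}$, say $y_{i_j}\in F$; since all of $y_i,y_{i_1},\ldots,y_{i_{r_i}}$ share the common neighbourhood $N_G(y_i)$ by Lemma \ref{L6}(iv), each of them may be adjoined to $F$ without destroying independence, so maximality forces $\{y_i,y_{i_1},\ldots,y_{i_{r_i}}\}\subseteq F$. Hence every facet contains $x_i$ or contains all of $y_i,y_{i_1},\ldots,y_{i_{r_i}}$.

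In the case $x_i\in F$, put $F'=F\setminus\{x_i\}$; independence gives $F'\cap N_G(x_i)=\emptyset$, so $F'\subseteq V(H)$ and $F'$ is independent in $H$. It is a facet of $\Delta_H$, because any $w\in V(H)\setminus F'$ with $F'\cup\{w\}$ independent in $H$ is non-adjacent to $x_i$ (as $w\notin N_G(x_i)$), whence $F\cup\{w\}$ would be independent in $G$, contradicting maximality; this is (1), and it reverses: if $F'$ is a facet of $\Delta_H$ then $F'\cup\{x_i\}$ is independent (since $F'\subseteq V(H)$ avoids $N_G(x_i)$) and its maximality is the converse reading of the same implication. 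In the case $x_i\notin F$ we have $\{y_i,\ldots,y_{i_{r_i}}\}\subseteq F$; put $F''=F\setminus\{y_i,\ldots,y_{i_{r_i}}\}$. Since $x_i$ and each $x_{i_j}$ is adjacent to $y_i\in F$ (Lemma \ref{L6}), $F$ avoids $\{x_i,x_{i_1},\ldots,x_{i_{r_i}}\}$, so $F''\subseteq V(K)$ and $F''$ is independent in $K$. That $F''$ is a facet of $\Delta_K$, and conversely that $F''\cup\{y_i,\ldots,y_{i_{r_i}}\}$ is a facet of $\Delta_G$ whenever $F''$ is a facet of $\Delta_K$, is assertion (2).

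The step I expect to be the main obstacle is exactly this passage between facets of $\Delta_K$ and facets of $\Delta_G$ of type (2), because here the neighbourhoods of the $y_{i_j}$'s must be controlled precisely. For the forward half: if $w\in V(K)\setminus F''$ with $F''\cup\{w\}$ independent in $K$, then maximality of $F$ forces $w$ to be adjacent to some $y_{i_j}$, so $w=x_\ell$ with $\ell\notin\{i,i_1,\ldots,i_{r_i}\}$; but then the matching partner $y_\ell\notin F$, so $F$ contains a neighbour $x_m\ne x_\ell$ of $y_\ell$, and condition (2) of the pure order (Theorem \ref{T1}) applied to $\{x_m,y_\ell\}$ and $\{x_\ell,y_{i_j}\}$ yields $\{x_m,y_{i_j}\}\in E(G)$ with $x_m,y_{i_j}\in F$, contradicting independence of $F$; thus no such $w$ exists and $F''$ is a facet of $\Delta_K$. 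For the converse half one must verify that $F''\cup\{y_i,\ldots,y_{i_{r_i}}\}$ is still independent, i.e.\ that $F''$ meets no neighbour of any $y_{i_j}$; this is precisely where Lemma \ref{L6}(iii)--(iv), which determines $N_G(y_i)$ and hence $V(K)\cap N_G(y_i)$, is indispensable, and once it is in hand the maximality of $F''\cup\{y_i,\ldots,y_{i_{r_i}}\}$ in $\Delta_G$ follows by reversing the forward argument.
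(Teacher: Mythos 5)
Your treatment of the forward implication is correct and follows the same dichotomy as the paper: every facet $F$ of $\Delta_G$ either contains $x_i$ (giving type (1) via $\Delta_H=\lk_{\Delta_G}\{x_i\}$) or contains all of $y_i,y_{i_1},\ldots,y_{i_{r_i}}$ (giving type (2)). For the maximality of $F''$ in $\Delta_K$ you argue through condition (2) of the pure order (Theorem \ref{T1}), whereas the paper extends $F''$ to a facet $L$ of $\Delta_K$ and compares cardinalities using unmixedness; your route is if anything more careful, since the paper's step ``$L\cup\{y_i,\ldots,y_{i_{r_i}}\}$ is an independent set of $G$'' is exactly the kind of assertion that needs the justification you supply.

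The genuine gap is in the converse of (2), which you yourself flag as the main obstacle and then resolve incorrectly. You claim that Lemma \ref{L6}(iii)--(iv) ``determines $N_G(y_i)$'' and hence that $V(K)\cap N_G(y_i)=\emptyset$, so that $F''\cup\{y_i,\ldots,y_{i_{r_i}}\}$ is independent for \emph{every} facet $F''$ of $\Delta_K$. Lemma \ref{L6}(iv) only says the vertices $y_i,y_{i_1},\ldots,y_{i_{r_i}}$ share a common neighbourhood; it does not say that this neighbourhood equals $\{x_i,x_{i_1},\ldots,x_{i_{r_i}}\}$, and in general it is strictly larger, so $N_G(y_i)$ can meet $V(K)$. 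Concretely, take the path $G$ with edges $\{x_1,y_1\},\{x_2,y_1\},\{x_2,y_2\}$: this is unmixed with the pure order, $x_1$ has minimal degree, $N_G(x_1)=\{y_1\}$, $K=H$ is the single edge on $\{x_2,y_2\}$, and $\{x_2\}$ is a facet of $\Delta_K$, yet $\{x_2\}\cup\{y_1\}=\{x_2,y_1\}$ is an edge of $G$ and hence not a facet of $\Delta_G$. So the backward implication in (2) is false as stated and cannot be repaired along the lines you propose; for the same reason your preliminary identification $\Delta_K=\lk_{\Delta_G}\{y_i,y_{i_1},\ldots,y_{i_{r_i}}\}$ fails (the link can be strictly smaller). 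In fairness, the paper itself disposes of the converse with ``follows readily by similar arguments'' and never uses it --- only the forward implication enters the proofs of Theorems \ref{T4} and \ref{T5} --- but your write-up stakes the converse on a specific claim about $N_G(y_i)$ that does not hold.
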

\begin{proof}
Let $F$ be a facet of $\Delta_G$. Since $G$ is unmixed bipartite graph, it follows that if $x_i \notin F$, then $y_i \in F$. By Lemma \ref{L6} part $(iv)$, we have $N_G(y_i)=N_G(y_{i_1})= \ldots = N_G(y_{i_{r_i}})$ and so $y_{i_1}, \ldots, y_{i_{r_i}} \in F$. Set $F^{\prime\prime}=F \setminus \{ y_{i},y_{i_1}, \ldots, y_{i_{r_i}} \}$. We must show that $F^{\prime\prime}$ is a facet of $\Delta_K$. First notice that $F^{\prime\prime}$ is an
independent set of $K$, because $E(K) \subseteq E(G)$. Let $L$ be a facet of $\Delta_K$ containing $F^{\prime\prime}$. Since
$L \cup \{y_{i},y_{i_1}, \ldots, y_{i_{r_i}} \}$ is an independent set of $G$, and $G$ is unmixed, we obtain that $|L|+r_{i}+1 \leq |F| =|F^{\prime\prime}|+r_{i}+1$. Hence $F^{\prime\prime}=L$.

On the other hand if $x_i \in F$,
then we have $y_{i},y_{i_1}, \ldots, y_{i_{r_i}} \notin F$. Set $F^{\prime}=F \setminus \{ x_{i} \}$. $F^{\prime}$ is a facet of $\Delta_{H}$, since $\Delta_{H}=\lk_{\Delta_G}{ \{ x_i \} }$.
 The converse also follows readily by using the similar arguments.
\end{proof}

\begin{Theorem}\label{T4}
Let $G$ be an unmixed bipartite graph with bipartition $V_1= \{ x_1, \ldots, x_n \}$ and $V_2= \{ y_1, \ldots, y_n \}$ and let $H=G\setminus N_{G}[x_{n} ]$ and $K=G\setminus N_{G}[y_{n}]$ be subgraphs of $G$ such that $N_{G}(x_{n}) =\{ y_{n} \}$. Then $H$ and $K$ are aCM graphs if and only if $G$ is aCM graph.
\end{Theorem}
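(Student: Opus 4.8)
The plan is to work with the independence complex $\Delta=\Delta_G$ and a Reisner-type homological criterion for the almost Cohen-Macaulay property, rather than with exact sequences of modules. Recall that $\Delta$ is pure of dimension $n-1$, since $G$ is unmixed with $\hte I(G)=n$, and that by Lemma~\ref{L3} one has $\Delta_H=\lk_\Delta\{x_n\}$ and $\Delta_K=\lk_\Delta\{y_n\}$, each pure of dimension $n-2$ (a link of a pure complex is pure). The tool I would invoke is the consequence of Hochster's formula for local cohomology which states that a pure complex $\Gamma$ of dimension $d$ is aCM if and only if $\tilde H_i(\lk_\Gamma F;k)=0$ for every face $F\in\Gamma$ and every $i<\dim\lk_\Gamma F-1$ (see \cite{HH1}; cf.\ \cite{MN2}), with the convention $\lk_\Gamma\emptyset=\Gamma$. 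Applying this to $\Gamma=\Delta$, $d=n-1$, I must show $\tilde H_i(\lk_\Delta F;k)=0$ for $i<n-|F|-2$, for each face $F\in\Delta$. Since $\{x_n,y_n\}\in E(G)$, no face of $\Delta$ contains both $x_n$ and $y_n$, so the faces split into three types: $x_n\in F$; $y_n\in F$; or $x_n,y_n\notin F$.

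If $x_n\in F$, put $F_0=F\setminus\{x_n\}$; then $y_n\notin F$ forces $F_0\in\Delta_H$, and composing links gives $\lk_\Delta F=\lk_{\Delta_H}F_0$. Using $\dim\Delta_H=n-2$ one checks $n-|F|-2=\dim\lk_{\Delta_H}F_0-1$, so the vanishing asked of $\lk_\Delta F$ is exactly the one that ``$\Delta_H$ is aCM'' supplies for $\lk_{\Delta_H}F_0$; and $F_0$ ranges over all of $\Delta_H$ as $F$ ranges over the faces through $x_n$. The type $y_n\in F$ is handled identically with $\Delta_K$ replacing $\Delta_H$. This already yields the forward implication: if $\Delta$ is aCM then restricting the criterion to faces through $x_n$ (resp.\ through $y_n$) shows $\Delta_H$ (resp.\ $\Delta_K$) is aCM, i.e.\ $H$ and $K$ are aCM. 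For the converse it remains to treat the faces $F$ with $x_n,y_n\notin F$, using that $H$ and $K$ are aCM.

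Fix such an $F$ and write $\Lambda=\lk_\Delta F$; since $y_n\notin F$ we have $F\cup\{x_n\}\in\Delta$, so $x_n$ is a vertex of $\Lambda$. If some vertex of $F$ is adjacent to $y_n$, then $y_n$ is not a vertex of $\Lambda$, and for every $S\in\Lambda$ the unique neighbour $y_n$ of $x_n$ lies outside $S\cup F$, so $S\cup\{x_n\}\in\Lambda$; hence $x_n$ is a cone apex of $\Lambda$, $\Lambda$ is acyclic, and the vanishing is automatic. Otherwise $F\cup\{y_n\}\in\Delta$, i.e.\ $F\in\lk_\Delta\{y_n\}=\Delta_K$. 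Each facet $\Phi$ of $\Lambda$ yields a facet $\Phi\cup F$ of $\Delta$, which (being a maximal independent set, and $N_G(x_n)=\{y_n\}$) contains $x_n$ or $y_n$, hence so does $\Phi$; therefore $\Lambda=\st_\Lambda(x_n)\cup\st_\Lambda(y_n)$ with both stars cones, hence acyclic. A direct check gives $\st_\Lambda(x_n)\cap\st_\Lambda(y_n)=\lk_\Lambda\{y_n\}=\lk_\Delta(F\cup\{y_n\})=\lk_{\Delta_K}F$ (here the inclusion $\lk_\Lambda\{y_n\}\subseteq\lk_\Lambda\{x_n\}$ again uses $N_G(x_n)=\{y_n\}$). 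The reduced Mayer-Vietoris sequence of Proposition~\ref{MV} then collapses to isomorphisms $\tilde H_i(\Lambda;k)\cong\tilde H_{i-1}(\lk_{\Delta_K}F;k)$ for all $i$. Since $\Delta_K$ is aCM and pure of dimension $n-2$, the criterion gives $\tilde H_{i-1}(\lk_{\Delta_K}F)=0$ for $i-1<\dim\lk_{\Delta_K}F-1$, i.e.\ for $i<n-|F|-2=\dim\Lambda-1$, exactly as required. This exhausts all faces, so $\Delta$ (hence $G$) is aCM, completing the converse.

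The step I expect to be the main obstacle is the numerics in the last paragraph: one must be sure that the degree shift in $\tilde H_i(\Lambda)\cong\tilde H_{i-1}(\lk_{\Delta_K}F)$ exactly compensates for the dimension drop $\dim\Delta_K=\dim\Delta-1$, so that the vanishing coming from ``$\Delta_K$ aCM'' meets the target bound with nothing to spare, and that the Mayer-Vietoris intersection is genuinely $\lk_{\Delta_K}F$ and not a larger subcomplex. Once Lemma~\ref{L3} is used to identify the links, the three-way case analysis and the remaining link identifications are routine.
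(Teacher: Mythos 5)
Your proposal is correct, and its core machinery is the same as the paper's: the homological depth criterion (vanishing of $\tilde H_i(\lk_\Delta F;k)$ below $\dim\lk_\Delta F-1$), the identification $\Delta_H=\lk_\Delta\{x_n\}$, $\Delta_K=\lk_\Delta\{y_n\}$ via Lemma~\ref{L3}, the decomposition of the complex into the two closed stars of $x_n$ and $y_n$, the identification of their intersection with the link of $y_n$, and Mayer--Vietoris. The difference is one of completeness rather than of route. The paper verifies the Mayer--Vietoris step only for $F=\emptyset$ (obtaining $\tilde H_i(\Delta;k)=0$ for $i<n-2$) and appeals to an induction on $\dim\lk_\Delta F$ that is announced but not actually carried out; you instead run the criterion over \emph{every} face $F$, splitting into the three cases $x_n\in F$, $y_n\in F$, and $x_n,y_n\notin F$. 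The first two cases give you the forward implication directly (the conditions for $\Delta$ at faces through $x_n$ are verbatim the conditions for $\Delta_H$, after the index bookkeeping), where the paper instead cites \cite[Corollary 3.6]{MN2}; the third case, with its subdivision according to whether $y_n$ survives into $\lk_\Delta F$ (cone on $x_n$, hence acyclic) or not ($\st\cup\st$ with intersection $\lk_{\Delta_K}F$, then Mayer--Vietoris), is exactly the content the paper's induction would need and does not supply. Your degree count $\tilde H_i(\Lambda;k)\cong\tilde H_{i-1}(\lk_{\Delta_K}F;k)$ against the bound $i<n-|F|-2$ checks out exactly, and the purity of $\Delta$, $\Delta_H$, $\Delta_K$ needed throughout is guaranteed by unmixedness. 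So your write-up is a legitimate, and in fact tighter, version of the intended argument.
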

\begin{proof}
$(\Longleftarrow).$
Let $\Delta =\langle F_1, F_2, \ldots, F_m \rangle$ be the simplicial complex of independent sets of $G$. By Lemma \ref{L3} we have $\lk_{\Delta} \{x_n \}$ and $\lk_{\Delta} \{y_{n} \}$ are the
simplicial complexs of independent sets of $H$ and $K$ respectively. Therefore by\cite[Corollary 3.6]{MN2}, $H$ and $K$ are aCM.

$(\Longrightarrow).$
By \cite[Theorem 3.4]{MN2}, it is enough to show that $\tilde{H}_{i}(\lk_{\Delta}F; k)=0$ for all $F \in \Delta(n-1)$ and for all $i < \dim \lk_{\Delta}F-1$. We proceed by induction on $\dim \lk_{\Delta}F$. The case $\dim \lk_{\Delta}F=1$ is clear. Assume that $\dim \lk_{\Delta}F > 1$ and the assertion holds for $\dim \lk_{\Delta}F=n-1$.
Since for each $l$, $\{ x_l, y_l \} \in E(G)$ and $G$ is unmixed, every facet $F_k$ has exactely one of
$x_l$ and $y_l$. Thus either $x_l \in F_k$ or $y_l \in F_k$.
Since $N_{G}(x_{n}) =\{ y_{n} \}$, we can assume $ F_i$ contains $x_n$ such that $y_n \notin F_i$ for $i=1, \ldots , s$ and $ F_i$ contains $y_n$ such that $x_n \notin F_i$ for $i=s+1, \ldots, m$. Let $\Delta_{1}=\langle F_1, \ldots, F_s \rangle$ and $\Delta_{2}=\langle  F_{s+1}, \ldots, F_m \rangle$. Then $\Delta= \Delta_1 \cup \Delta_2$.
 We claim $\Delta_1 \cap \Delta_2=\lk_{\Delta} \{y_n \}$. It is obvious that $\lk_{\Delta} \{y_n \}$ is a subsimplicial complex of $\Delta_1 \cap \Delta_2$, since by Lemma \ref{L2}, $\lk_{\Delta} \{y_n \}$ is subsimplicial complex of $\lk_{\Delta} \{ x_n \}$. Now assume $F \in \Delta_1 \cap \Delta_2$. If $ F \in \Delta_1$, then $F \subseteq F_i$ for some $i$ in $\{ 1, \ldots, s \}$. By Lemma \ref{L7}, there exist a facet $F^{\prime}_{i}$ in $\lk_{\Delta} \{x_n \}$ such that $F \subseteq F^{\prime}_{i} \cup \{ x_n \}$. By similar argument there exist a facet $F^{\prime}_{j}$ in $\lk_{\Delta} \{y_{n} \}$ such that $F \subseteq F^{\prime}_{j} \cup \{ y_n \}$. Hence $ F \subseteq F^{\prime}_{i}\cap F^{\prime}_{j} \in \lk_{\Delta} \{y_n \}$. Now by using the Mayer-Vietoris exact sequence we have $\tilde{H}_{i}({\Delta}; k)=0$ for $i < n-2$. Hence $\Delta$ is aCM.
\end{proof}

\begin{Theorem}\label{T5}
Let $G$ be an unmixed bipartite graph with pure order of vertices $ \{ x_1, \ldots, x_n \} \cup \{ y_1, \ldots, y_n \}$. Let $H=G\setminus N_{G}[x_{n} ]$ and $K=G\setminus N_{G}[y_{n},  y_{n-1} ]$ be subgraphs of $G$ such that $N_{G}(x_{n}) =\{ y_{n}, y_{n-1} \}$. If  $H$ is aCM and $K$ is CM, then $G$ is aCM.
\end{Theorem}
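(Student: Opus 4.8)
The plan is to mimic the structure of the proof of Theorem~\ref{T4}, using the Mayer--Vietoris sequence together with the decomposition of $\Delta=\Delta_G$ coming from the vertex $x_n$ of degree two. By \cite[Theorem 3.4]{MN2} it suffices to show $\tilde H_i(\lk_{\Delta} F;k)=0$ for all $F\in\Delta(n-1)$ and all $i<\dim\lk_{\Delta}F-1$, and I would prove this by induction on $\dim\lk_{\Delta}F$, the base case $\dim=1$ being trivial. For the inductive step, since $G$ is unmixed every facet contains exactly one of $x_l,y_l$ for each $l$; in particular, since $N_G(x_n)=\{y_n,y_{n-1}\}$, I can split the facets into $\Delta_1=\langle F_i: x_n\in F_i\rangle$ and $\Delta_2=\langle F_i: x_n\notin F_i\rangle$, so that $\Delta=\Delta_1\cup\Delta_2$. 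Note that a facet in $\Delta_2$ contains $y_n$ (as $x_n\notin F_i$), and by Lemma~\ref{L6}(iv) applied with $x_n$ the minimal-degree vertex — if $x_n$ is indeed of minimal degree — such a facet also contains $y_{n-1}$; this is what identifies $\Delta_2$ with the cone-type pieces governed by $K$.

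The key step is to identify $\Gamma:=\Delta_1\cap\Delta_2$ with $\lk_{\Delta}\{y_n,y_{n-1}\}$, which by Lemma~\ref{L3} is $\Delta_K$. The inclusion $\lk_{\Delta}\{y_n,y_{n-1}\}\subseteq\Delta_1\cap\Delta_2$ follows from Lemma~\ref{L2} (which gives $\lk_{\Delta}\{y_n,y_{n-1}\}\subseteq\lk_{\Delta}\{x_n\}\subseteq\Delta_1$) together with the obvious containment in $\Delta_2$. For the reverse inclusion, take $F\in\Delta_1\cap\Delta_2$: from $F\in\Delta_1$ and Lemma~\ref{L7}(1) there is a facet $F'$ of $\Delta_H$ with $F\subseteq F'\cup\{x_n\}$, and from $F\in\Delta_2$ and Lemma~\ref{L7}(2) there is a facet $F''$ of $\Delta_K$ with $F\subseteq F''\cup\{y_n,y_{n-1}\}$; hence $F\subseteq (F'\cup\{x_n\})\cap(F''\cup\{y_n,y_{n-1}\})\subseteq F''$ (using that $x_n\notin F''$ and $y_n,y_{n-1}\notin F'$), so $F\in\Delta_K=\lk_{\Delta}\{y_n,y_{n-1}\}$. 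Thus $\Gamma=\Delta_K$.

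Now I would feed this into the reduced Mayer--Vietoris sequence (Proposition~\ref{MV})
\[
\cdots\to\tilde H_i(\Gamma;k)\to\tilde H_i(\Delta_1;k)\oplus\tilde H_i(\Delta_2;k)\to\tilde H_i(\Delta;k)\to\tilde H_{i-1}(\Gamma;k)\to\cdots
\]
For $i<n-2$ the outer terms vanish: $\tilde H_i(\Delta_1;k)=\tilde H_i(\lk_\Delta\{x_n\}\ast\{x_n\};k)=0$ since $\Delta_1$ is a cone over $\Delta_H$; $\tilde H_i(\Delta_2;k)=0$ because $\Delta_2$ is (by Lemma~\ref{L7}(2) and Lemma~\ref{L6}(iv)) a cone over $\Delta_K$ with apex in $\{y_n,y_{n-1}\}$; and $\tilde H_i(\Gamma;k)=\tilde H_i(\Delta_K;k)=0$ for $i<\dim\Delta_K-1$, but since $K$ is Cohen--Macaulay this even holds for $i<\dim\Delta_K$, which (checking dimensions, using unmixedness and $\dim\Delta_K=\dim\Delta-2$) covers the needed range $i<n-2$. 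Hence $\tilde H_i(\Delta;k)=0$ for $i<n-2$, i.e. $\Delta$ is aCM, which is what we want. The same argument applied to the links $\lk_\Delta F$ (which are again independence complexes of unmixed bipartite induced subgraphs, with the degree-two structure inherited) completes the induction.

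The main obstacle I anticipate is bookkeeping the dimensions precisely: one must verify that "$K$ Cohen--Macaulay" really does kill $\tilde H_i(\Gamma;k)$ throughout the range $i<n-2$ rather than only $i<n-3$, and that the hypothesis "$x_n$ of minimal degree" (needed for Lemma~\ref{L6}(iv), hence for the cone structure of $\Delta_2$) can be arranged — here one should note that relabeling lets us take $x_n$ minimal among the $x_i$, and then argue the degree-two neighbor set is exactly $\{y_n,y_{n-1}\}$ up to reordering within the pure order. A secondary subtlety is checking the base of the induction and that the splitting behaves well under taking links $\lk_\Delta F$ for $F\in\Delta(n-1)$, so that Lemmas~\ref{L2}, \ref{L6}, \ref{L7} remain applicable to the smaller complexes.
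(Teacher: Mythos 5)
Your proposal is correct and follows essentially the same route as the paper's own proof: the reduction via \cite[Theorem 3.4]{MN2}, the splitting $\Delta=\Delta_1\cup\Delta_2$ according to whether a facet contains $x_n$ or $\{y_n,y_{n-1}\}$, the identification $\Delta_1\cap\Delta_2=\lk_{\Delta}\{y_n,y_{n-1}\}$ via Lemmas \ref{L2} and \ref{L7}, and the Mayer--Vietoris conclusion. In fact you supply more detail than the paper does at the final step (the cone structure of $\Delta_1,\Delta_2$ and the dimension count showing that Cohen--Macaulayness of $K$ kills $\tilde H_{i-1}(\Gamma;k)$ exactly in the range $i<n-2$), which is where the stronger hypothesis on $K$ is genuinely used.
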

\begin{proof}

Let $\Delta =\langle F_1, F_2, \ldots, F_m \rangle$ be the simplicial complex of independent sets of $G$. By Lemma \ref{L3},  we have $\lk_{\Delta} \{x_n \}$ and $\lk_{\Delta} \{y_{n}, y_{n-1} \}$ are simplicial complexs of independent sets of $H$ and $K$, respectively.

By \cite[Theorem 3.4]{MN2}, it is enough to show that $\tilde{H}_{i}(\lk_{\Delta}F; k)=0$ for all $F \in \Delta(n-1)$ and for all $i < \dim \lk_{\Delta}F-1$. We proceed by induction on $\dim \lk_{\Delta}F$. The case $\dim \lk_{\Delta}F=1$ is clear. Assume that $\dim \lk_{\Delta}F > 1$ and the assertion holds for $\dim \lk_{\Delta}F=n-1$.
Since $N_{G}(x_{n}) =\{ y_{n}, y_{n-1} \}$ and $G$ is unmixed, every facet $F_k$  has exactely one of
$x_n$ and $y_i$ for $n-1 \leq i \leq n$. Now by Lemma \ref{L6}, we can assume $ F_i$ contains $x_n$ and none of $ y_j \in \{y_n, y_{n-1} \}$ for $i=1, \ldots , s$ and also $\{y_{n}, y_{n-1} \} \subseteq F_j$ for $j=s+1, \ldots, m$ such that $x_n \notin F_j$. Let $\Delta_{1}=\langle F_1, \ldots, F_s \rangle$ and $\Delta_{2}=\langle  F_{s+1}, \ldots, F_m \rangle$. Then $\Delta= \Delta_1 \cup \Delta_2$.
By Lemma \ref{L3}, $\lk_{\Delta} \{y_{n}, y_{n-1} \}$ is subsimplicial complex of $\lk_{\Delta} \{x_n \}$. We claim $\Delta_1 \cap \Delta_2=\lk_{\Delta} \{y_{n}, y_{n-1} \}$. It is obvious that $\lk_{\Delta} \{y_{n}, y_{n-1} \}$ is a subsimplicial complex of $\Delta_1 \cap \Delta_2$. Now assume $F \in \Delta_1 \cap \Delta_2$. If $ F \in \Delta_1$, then $F \subseteq F_i$ for some $i$ in $\{ 1, \ldots, s \}$. By Lemma \ref{L7}, there exist a facet $F^{\prime}_{i}$ in $\lk_{\Delta} \{x_n \}$ such that $F \subseteq F^{\prime}_{i} \cup \{ x_n \}$. By similar argument there exist a facet $F^{\prime}_{j}$ in $\lk_{\Delta} \{y_{n}, y_{n-1} \}$ such that $F \subseteq F^{\prime}_{j} \cup \{ y_n,y_{n-1} \}$. Hence $ F \subseteq F^{\prime}_{i}\cap F^{\prime}_{j} \in \lk_{\Delta} \{y_{n}, y_{n-1} \}$. By using the Mayer-Vietoris exact sequence, it therefore follows that $\Delta$ is aCM.
\end{proof}

The following example shows that the condition of Cohen-Macualyness of $K$ is essential.

\begin{Example}
Let $G$ be the following unmixed bipartite graph.
\[\begin{tikzpicture}
	\vertex[fill] (x1) at (0,2) [label=above:$x_{1}$] {};
	\vertex[fill] (x2) at (1,2) [label=above:$x_{2}$] {};
	\vertex[fill] (x3) at (2,2) [label=above:$x_{3}$] {};
\vertex[fill] (x4) at (3,2) [label=above:$x_{4}$] {};
\vertex[fill] (x5) at (4,2) [label=above:$x_{5}$] {};
\vertex[fill] (x6) at (5,2) [label=above:$x_{6}$] {};
	\vertex[fill] (y1) at (0,0) [label=below:$y_{1}$] {};
	\vertex[fill] (y2) at (1,0) [label=below:$y_{2}$] {};
	\vertex[fill] (y3) at (2,0) [label=below:$y_{3}$] {};
\vertex[fill] (y4) at (3,0) [label=below:$y_{4}$] {};
\vertex[fill] (y5) at (4,0) [label=below:$y_{5}$] {};
\vertex[fill] (y6) at (5,0) [label=below:$y_{6}$] {};
	\path
		(x1) edge (y1)
		(x1) edge (y2)
		(x1) edge (y3)
		(x1) edge (y4)
		(x1) edge (y5)
		(x1) edge (y6)
		(x2) edge (y1)
		(x2) edge (y2)
		(x2) edge (y3)
(x2) edge (y4)
(x2) edge (y5)
(x2) edge (y6)
		(x3) edge (y3)
		(x3) edge (y4)
		(x4) edge (y3)
		(x4) edge (y4)
		(x5) edge (y5)
		(x5) edge (y6)
		(x6) edge (y5)
		(x6) edge (y6)
		
	;
\end{tikzpicture}\]
$H=G \setminus N_G [x_6]$ and $K=G \setminus N_{G}[ \{ y_5,y_6 \}]$ are aCM, but $G$ is not aCM.
\end{Example}

\section{Almost Cohen-Macaulay and connected in codimension two}

We recall the concept of connected in codimension $k$, for a topological space is defined by Hartshoren in \cite{Har}, for any non-negative integer $k$. For a monomial ideal $I$, considering the Zariski topology on $\Spec(R/I)$, we get that the closed subsets in this topology are the sets
$V(J) = \{ \frk{q} \in \Spec (R) ~|~  J \subseteq \frk{q} \}$, where $J \subseteq I$ is an ideal of $R$ and the irreducible
components of $\Spec (R/I)$ are the closed sets $V(\frk{p})$, where $\frk{p}$ is a minimal prime ideal of
$I$. $\Spec (R/I)$ with this topology is a connected space. By \cite[Proposition 1.1]{Har}, the ideal $I$ is called connected in codimension $k$, if $V(\frk{p})$ and $V(\frk{q})$ are irreducible comnponents of $\Spec (R/I)$, then there is a finite sequence $V(\frk{p}) =V(\frk{p}_1),V(\frk{p}_2) , \ldots, V(\frk{p}_r)= V(\frk{q})$ of irreducible components of $\Spec (R/I)$, such that for each $i= 1, 2, \ldots , r -1$, $V(\frk{p}_i) \cap V(\frk{p}_{i+1})$ is of codimension $\leq k$ in $\Spec (R/I)$. Since the codimension of $V(\frk{p})$ is equal to $\hte (\frk{p})- \hte (I)$
for all prime ideals $\frk{p} \supseteq I$, a monomial ideal $I \subset R$ is connected in codimension $k$, if for any pair of distinct minimal prime ideals $\frk{p}$ and $\frk{q}$, there exists a  sequence of minimal prime ideals $\frk{p}=\frk{p}_1, \ldots, \frk{p}_r=\frk{q}$ such that $\hte(\frk{p}_i +\frk{p}_{i+1})\leq \hte(I)+k$. As $\frk{p}_i +\frk{p}_{i+1}$ is again a prime ideal, one can replace the height with the number of generators.

The definition for a simplicial complex $\Delta$ should be organized in such a way that the Stanley-Reisner ideal of $\Delta$, becomes connected in codimension $k$, i.e. A simplicial complex $\Delta$ is said to be connected in codimension $k$, if for any two facets $F$ and $G$ of $\Delta$, there is
a sequence of facets $F = F_1, F_2, . . . , F_r = G$ such that $\dim (F_i \cap F_{i+1}) \geq \dim \Delta - k$, for each $1 \leq i \leq r - 1$.
By the above definition,  if a monomial ideal $I$ is connected in codimension one, then it is equidimensional i.e. all minimal prime ideals of $I$ have the same height. In particular, if $I$ is square-free monomial ideal connected in codimension one then it is unmixed i.e. all prime ideals of $\Ass(I)$ have the same height,
(see also \cite[Definition 3.1]{BJ}).

Thus we can rewrite the following result:

\begin{Theorem}(\cite[Theorem 1.3]{HYZ})
Let $G$ be a bipartite graph with at least four vertices. Then $G$ is a connected in codimension one if and only if $G$ is a CM graph.
\end{Theorem}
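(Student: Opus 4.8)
\medskip
\noindent The plan is to translate both conditions into one combinatorial statement about an auxiliary preorder on $[n]$ and then compare.

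\smallskip
\noindent\emph{Setting up the dictionary.} Since a squarefree monomial ideal connected in codimension one is unmixed (as recalled above) and a Cohen--Macaulay ideal is always unmixed, on either side of the asserted equivalence we may assume $G$ is unmixed; fix a pure order $V_1=\{x_1,\dots,x_n\}$, $V_2=\{y_1,\dots,y_n\}$ as in Theorem~\ref{T1}, and define a preorder on $[n]$ by $i\preceq j\iff\{x_i,y_j\}\in E(G)$ --- reflexive by part (1) and transitive by part (2) of Theorem~\ref{T1}. As recalled in the introduction, the minimal vertex covers of $G$ are exactly the sets $C_S=\{x_i:i\in S\}\cup\{y_j:j\notin S\}$ for $S$ an order ideal of $\preceq$ (i.e.\ $j\in S$, $i\preceq j$ force $i\in S$), all of cardinality $n=\hte I(G)$. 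Writing $\mathfrak p_S$ for the associated minimal prime, one checks directly that $\mathfrak p_S+\mathfrak p_{S'}$ is generated by $n+|S\,\triangle\,S'|$ distinct variables, so $V(\mathfrak p_S)\cap V(\mathfrak p_{S'})$ has codimension $|S\,\triangle\,S'|$ in $\Spec(R/I(G))$. Hence $G$ is connected in codimension one if and only if the graph $\mathcal{E}$ on the order ideals of $\preceq$, with $S$ and $S'$ adjacent when $|S\,\triangle\,S'|=1$, is connected.

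\smallskip
\noindent\emph{Cohen--Macaulayness vs.\ antisymmetry.} Next I would record the following consequence of \cite{V1} and \cite{HH4}: if a pure order has antisymmetric preorder $\preceq$, then relabeling $x_i,y_i$ simultaneously along a linear extension of $\preceq$ brings $G$ into the triangular form of Herzog--Hibi, so $G$ is Cohen--Macaulay; conversely that triangular form is itself a pure order whose preorder is antisymmetric (since $\{x_i,y_j\}\in E(G)\Rightarrow i\le j$). Thus an unmixed bipartite $G$ is Cohen--Macaulay if and only if some pure order has antisymmetric $\preceq$; in particular, if $G$ is unmixed but not Cohen--Macaulay, then the pure order fixed above satisfies $i\preceq j$ and $j\preceq i$ for some $i\neq j$.

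\smallskip
\noindent\emph{The two implications.} For ``Cohen--Macaulay $\Rightarrow$ connected in codimension one'', choose the triangular pure order so that $\preceq$ is a partial order; then $\mathcal{E}$ is connected because one passes from $\emptyset$ to any order ideal $S$ by adjoining the elements of $S$ one at a time along a linear extension of $\preceq|_S$, each partial union being an order ideal and hence each step an edge of $\mathcal{E}$. (Alternatively this direction is immediate from Hartshorne's connectedness theorem \cite{Har} because $\dim R/I(G)=n\ge 2$ under the standing hypothesis of at least four vertices.) For the converse, assume $G$ is connected in codimension one but not Cohen--Macaulay, and pick $i\neq j$ with $i\preceq j\preceq i$ as above. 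Every order ideal contains $i$ if and only if it contains $j$, so no two order ideals have symmetric difference $\{i\}$ or $\{j\}$; hence every edge of $\mathcal{E}$ joins order ideals that agree on membership of $i$, and $\mathcal{E}$ splits into the order ideals containing $i$ and those not containing $i$. Both families are nonempty (take $\{k:k\preceq i\}$ and $\emptyset$), so $\mathcal{E}$ is disconnected, contradicting the first paragraph. Therefore $G$ is Cohen--Macaulay.

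\smallskip
\noindent\emph{Main obstacle.} I expect the substance to lie entirely in the bookkeeping of the first two paragraphs: the bijection between minimal primes and order ideals, the height identity $\hte(\mathfrak p_S+\mathfrak p_{S'})=n+|S\,\triangle\,S'|$, and the way the Villarreal and Herzog--Hibi normal forms interact with linear extensions. Once the problem is recast in terms of the exchange graph $\mathcal{E}$, both directions come down to the elementary fact that $\mathcal{E}$ is connected precisely when $\preceq$ is antisymmetric.
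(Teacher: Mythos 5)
Your argument is correct, and there is nothing in the paper to compare it with: the authors do not prove this statement, they simply quote it as \cite[Theorem 1.3]{HYZ} (where the result is obtained by a different route, via Serre's condition $S_2$ and the behaviour of links of faces). Your proof is a self-contained combinatorial substitute. The dictionary you set up is sound: for an unmixed pure order the minimal vertex covers are exactly the sets $C_S$ with $S$ an order ideal of the preorder $i\preceq j\iff\{x_i,y_j\}\in E(G)$, the identity $\hte(\mathfrak p_S+\mathfrak p_{S'})=n+|S\,\triangle\,S'|$ is a correct count of variables, and so connectedness in codimension one is exactly connectedness of the exchange graph $\mathcal E$. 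The reduction of Cohen--Macaulayness to antisymmetry of $\preceq$ is a faithful restatement of Herzog--Hibi \cite{HH4} (this is the one genuinely nontrivial external input; everything else you use is elementary or already recorded in the paper, e.g.\ that connectedness in codimension one forces unmixedness). Both implications then check out: the chain $\emptyset\subset\{s_1\}\subset\cdots\subset S$ along a linear extension does consist of order ideals, and in the converse the relation $i\preceq j\preceq i$ really does force every edge of $\mathcal E$ to preserve membership of $i$ while both classes are nonempty. What your approach buys is a purely poset-theoretic proof that bypasses local cohomology and the $S_2$ machinery of \cite{HYZ}; the cost is that it leans on the hard (``triangular form implies CM'') direction of \cite{HH4}, and the parenthetical appeal to Hartshorne's connectedness theorem is dispensable since your direct argument already covers that implication.
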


\begin{Theorem}\label{T2}
Let $\Delta$ be $(d-1)$-dimensional ($d \geq 3$) aCM simplicial complex. Then it is connected in codimension two.
\end{Theorem}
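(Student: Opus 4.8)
The plan is to translate the hypothesis $\depth K[\Delta]\geq d-1$ into statements about vertex links and then run an induction on $d$ that reduces codimension-two connectedness of $\Delta$ to codimension-one connectedness of those links.

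First I would extract three structural facts. (1) Since $d=\dim K[\Delta]$ and $d\geq 3$, we have $\depth K[\Delta]\geq d-1\geq 2$, so $H^0_{\mm}(K[\Delta])=H^1_{\mm}(K[\Delta])=0$; by Hochster's formula---equivalently the homological criterion behind \cite[Theorem 3.4]{MN2}---this forces $\Delta$ to be connected. (2) For every facet $\tau$ of $\Delta$ the ideal $\mathfrak p_\tau=(x_i:x_i\notin\tau)$ is a minimal, hence associated, prime of $I_\Delta$, so $d-1\leq\depth K[\Delta]\leq\dim K[\Delta]/\mathfrak p_\tau=|\tau|$; thus every facet of $\Delta$ has at least $d-1$ vertices, i.e.\ dimension $d-2$ or $d-1$. (3) Exactly as in the proof of Theorem~\ref{T3}, \cite[Theorem 3.4]{MN2} shows that for a vertex $v$ of $\Delta$ the link $\lk_\Delta v$ is almost Cohen--Macaulay if $v$ lies in some $(d-1)$-dimensional facet, and Cohen--Macaulay otherwise; in the latter case $\lk_\Delta v$ is pure, and by (2) it has dimension exactly $d-3$.

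Next I would argue by induction on $d\geq 3$. For $d=3$: since $\Delta$ is connected, given facets $F$ and $G$ I choose $p\in F$, $q\in G$ and a walk $p=u_0,u_1,\dots,u_m=q$ in the $1$-skeleton, pick a facet $S_j$ containing the edge $\{u_j,u_{j+1}\}$ for each $j$, and note that $F,S_0,S_1,\dots,S_{m-1},G$ is a chain of facets whose consecutive members share at least a vertex, i.e.\ meet in dimension $\geq 0=\dim\Delta-2$. For $d\geq 4$ the heart of the argument is the claim: if $\sigma_1,\sigma_2$ are facets of $\Delta$ both containing a vertex $v$, then there is a chain of facets of $\Delta$ from $\sigma_1$ to $\sigma_2$, all containing $v$, whose consecutive intersections have dimension $\geq d-3$. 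To prove it, observe that $\sigma_1\setminus\{v\}$ and $\sigma_2\setminus\{v\}$ are facets of $\lk_\Delta v$, which by fact (3) is either almost Cohen--Macaulay of dimension $d-2$---hence connected in codimension two by the induction hypothesis, since $(d-2)+1=d-1\geq 3$---or Cohen--Macaulay and pure of dimension $d-3$, hence connected in codimension one by the classical fact that a Cohen--Macaulay complex is connected in codimension one (all links of its codimension-$\geq 2$ faces are connected, by Reisner's criterion). In either case one gets a chain of facets of $\lk_\Delta v$ with consecutive intersections of dimension $\geq d-4$, and adjoining $v$ to every member of that chain yields the chain required by the claim. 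Granting the claim, the general case is finished as for $d=3$: for arbitrary facets $F,G$, if they share a vertex apply the claim directly; otherwise pick $p\in F$, $q\in G$, a $1$-skeleton walk $p=u_0,\dots,u_m=q$ with $m\geq 1$, and facets $S_j\supseteq\{u_j,u_{j+1}\}$, and splice the chains given by the claim at $u_0$ (joining $F$ and $S_0$), at $u_j$ for $1\leq j\leq m-1$ (joining $S_{j-1}$ and $S_j$), and at $u_m$ (joining $S_{m-1}$ and $G$); this is legitimate because the two facets to be joined at $u_j$ both contain $u_j$.

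The step I expect to be the main obstacle is the bookkeeping in the induction. One must check that connectedness descends to the iterated links $\lk_{\lk_\Delta v}(H)=\lk_\Delta(H\cup\{v\})$---this works because the vanishing range coming from $\depth\geq d-1$ is exactly one step wider than strictly necessary---and, more delicately, one must know that even the non-pure or dimension-dropping links still have dimension at least $d-3$, since otherwise the chain produced inside $\lk_\Delta v$ could have intersections of dimension below $d-4$ and would fail to glue back to a valid chain in $\Delta$; this is precisely where the facet-size bound $|\tau|\geq d-1$ from fact (2) is used. Apart from Reisner's criterion for the Cohen--Macaulay links, everything is a transfer of the $\depth\geq\dim-1$ hypothesis through \cite[Theorem 3.4]{MN2} together with the elementary walk argument in the $1$-skeleton.
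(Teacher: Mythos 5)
Your proposal is correct and follows essentially the same route as the paper's proof: establish connectedness of $\Delta$ from the depth hypothesis, pass to the link of a vertex shared by two consecutive facets, use that this link is Cohen--Macaulay (hence connected in codimension one) or almost Cohen--Macaulay (hence, by induction on $d$, connected in codimension two), and lift the resulting chain of facets back to $\Delta$ by adjoining the vertex. Your explicit facet-size bound $|\tau|\geq d-1$ simply makes precise the dimension bookkeeping that the paper leaves implicit in the inequality $|H_j|\geq d-1$.
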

\begin{proof}
We argue by induction on $d$. If  $d=3$, then by \cite[Corollary 3.5]{MN2} $\Delta$ is connected. Therefore for arbitrary facets $F$ and $E$, there exists a
sequence of facets $F = F_0, F_1, . . . , F_{n-1}, F_n = E$ such that $F_i \cap F_{i+1}  \ne \emptyset$. So $\dim(F_{i} \cap F_{i+1}) \geq 0= \dim \Delta -2$. Hence $\Delta$ is connected in codimension two.
Now we assume that $d > 3$. Let $F$
and $E$ be two facets of $\Delta$. Since $\Delta$ is aCM and $d \geq 3$, we have $\Delta$ is connected by \cite[Lemma 3.2 and Theorem 3.4]{MN2}. Therefore  there exists a
sequence of facets $F = F_0, F_1, . . . , F_{n-1}, F_n = E$ such that $F_i \cap F_{i+1}  \ne \emptyset$. Let
$x_i$ be a vertex belonging to $F_i \cap F_{i+1}$. Since $\Delta$ is aCM, $\lk_{\Delta}\{ {x_i} \}$
is CM for $\{x_i\} \in \Delta \setminus \Delta(d-1)$ and $\lk_{\Delta}\{ {x_i} \}$ is aCM for $\{x_i\} \in \Delta(d-1)$ by \cite[Corollary 3.6]{MN2}. By \cite[Lemma 9.1.12]{HH1} and working with induction on the dimension of $\Delta$, we may assume that $\lk_{\Delta}\{ {x_i} \}$ is connected in codimension one for $\{x_i\} \in \Delta \setminus \Delta(d-1)$ and $\lk_{\Delta}\{ {x_i} \}$ is connected in codimension two  for $\{x_i\} \in \Delta(d-1)$ respectively. Thus $F_{i}^{\prime}:=F_{i} \setminus \{x_i \}$ and $F_{i+1}^{\prime}:=F_{i+1} \setminus \{x_i \}$ are facets of $\lk_{\Delta}\{ {x_i} \}$ and therefore there exists a sequence of facets $F_{i}^{\prime}= H_{0}^{\prime}, H_{1}^{\prime}, \ldots, H_{r-1}^{\prime}, H_{r}^{\prime} = F_{i+1}^{\prime}$
of $\lk_{\Delta}\{ {x_i} \}$ such that $|H_{j}^{\prime} \cap H_{j+1}^{\prime}| \geq d  -3$. Set $H_{j}=H_{j}^{\prime} \cup \{x_i \}$.
So there exists a sequence of facets $F_i = H_0, H_1, \ldots, H_{r-1}, H_r = F_{i+1}$
of $\Delta$, where all $H_j$ contain $x_i$ with $|H_j| \geq d-1$, such that $|H_j \cap H_{j+1}| \geq d  -2$.
Composing all these sequences of facets which we have between each $F_i$ and
$F_{i+1}$ yields the desired sequence between $F$ and $E$. This completes the proof.
\end{proof}

As before if the monomial ideal $I$ is connected in codimension one, then $I$ is equidimensional. But the following example says that for connected in codimensional two this is false.
\begin{Example}
Let $G$ be the following graph. Then
\[I(G)=(x_{1}y_{1}, x_{1}y_{2}, x_{2}y_1,x_2y_2, x_2y_3, x_3y_1, x_3y_2,x_3y_3,x_3y_4,x_4y_1,x_4y_2,x_4y_3,x_4y_4).\]

\[\begin{tikzpicture}
	\vertex[fill] (x1) at (0,2) [label=above:$x_{1}$] {};
	\vertex[fill] (x2) at (1,2) [label=above:$x_{2}$] {};
	\vertex[fill] (x3) at (2,2) [label=above:$x_{3}$] {};
\vertex[fill] (x4) at (3,2) [label=above:$x_{4}$] {};
\vertex[fill] (y1) at (0,0) [label=below:$y_{1}$] {};
	\vertex[fill] (y2) at (1,0) [label=below:$y_{2}$] {};
	\vertex[fill] (y3) at (2,0) [label=below:$y_{3}$] {};
\vertex[fill] (y4) at (3,0) [label=below:$y_{4}$] {};
\path
		(x1) edge (y1)
		(x1) edge (y2)

		(x2) edge (y1)
		(x2) edge (y2)
		(x2) edge (y3)
(x3) edge (y1)
		(x3) edge (y2)
		(x3) edge (y3)
		(x3) edge (y4)
		(x4) edge (y1)
		(x4) edge (y2)
		(x4) edge (y3)
		(x4) edge (y4)

	;
\end{tikzpicture}\]
By Macaulay $2$, we have
\begin{align*}
\Ass(R/I)=& \{ \frk{p}_1=( x_1, x_2, x_3, x_4), \frk{p}_2=( x_2, x_3, x_4, y_1, y_2), \frk{p}_3=(  x_3,  x_4, y_1,y_2, y_3),\\
& \frk{p}_4=( y_1, y_2, y_3, y_4) \}.
\end{align*}
By considering the sequence $\frk{p}_1, \frk{p}_2, \frk{p}_3, \frk{p}_4$ of minimal prime ideals of $I(G)$, we have $\hte(\frk{p}_i +\frk{p}_{i+1})\leq 6= \hte(I)+2$ for $1 \leq i \leq 3$. Therefore for any pair of distinct minimal prime ideals $\frk{p}$ and $\frk{q}$ in $I(G)$, there exists a  sequence of minimal prime ideals $\frk{p}=\frk{p}_1, \ldots, \frk{p}_r=\frk{q}$ such that $\hte(\frk{p}_i +\frk{p}_{i+1})\leq 6= \hte(I)+2$. Therefore $I(G)$ is connected in codimension $2$. But $I(G)$ is not unmixed.
\end{Example}

\begin{Theorem}\label{T12}
Let $G$ be an unmixed aCM bipartite graph with vertex partition $V_1 \cup V_2$. Then the vertices $V_1 = \{ x_1, \ldots, x_n \}$ and $V_2= \{ y_1, \ldots, y_n \}$ can be labeled such that:
\begin{enumerate}
\item[(i)] $\{ x_i, y_i \}$ are edges for $i = 1, \ldots, n$;
\item[(ii)] if $\{ x_i, y_j \}$ is an edge, then $i \leq j+1$;
\item[(iii)] if $\{ x_i, y_j \}$ and $ \{ x_j , y_k \} $ are edges, then $\{x_i, y_k \}$ is an edge.
\end{enumerate}
\end{Theorem}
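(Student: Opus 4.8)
The plan is not to induct on subgraphs but to \emph{reorder a pure order of $G$}. First I would invoke Theorem \ref{T1}: fix a pure order $V_1=\{x_1,\dots,x_n\}$, $V_2=\{y_1,\dots,y_n\}$, so that $\{x_i,y_i\}\in E(G)$ for all $i$ and condition (2) of Theorem \ref{T1} holds. Setting $i\preceq j$ iff $\{x_i,y_j\}\in E(G)$, reflexivity comes from $\{x_i,y_i\}\in E$ and transitivity from condition (2) (the non-distinct cases being absorbed by reflexivity), so $\preceq$ is a preorder; this already gives (iii), and gives (i) trivially. Write $i\sim j$ when $i\preceq j\preceq i$, let $C^{(1)},\dots,C^{(t)}$ be the $\sim$-classes, and let $P$ be the induced partial order on them. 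Since a simultaneous permutation of the indices of $V_1$ and of $V_2$ leaves (i) and (iii) intact, I only need to produce such a permutation $\phi$ of $[n]$ that forces (ii). The plan is to take $\phi$ listing the classes along a linear extension of $P$, with each class occupying a block of consecutive integers in an arbitrary internal order. Then an edge $\{x_{\phi(\alpha)},y_{\phi(\beta)}\}$ means $\alpha\preceq\beta$, and: if $\alpha=\beta$ then $\phi(\alpha)=\phi(\beta)$; if $\alpha\sim\beta$ with $\alpha\neq\beta$ then $\phi(\alpha),\phi(\beta)$ lie in the same block, so $\phi(\alpha)\le\phi(\beta)+1$ as soon as that block has length $\le 2$; and if $[\alpha]\prec_P[\beta]$ then the block of $[\alpha]$ precedes that of $[\beta]$, so $\phi(\alpha)<\phi(\beta)$. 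Hence (ii) follows once I know every $\sim$-class has at most two elements.

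That bound is where the aCM hypothesis must enter, and it is the main obstacle. Suppose a $\sim$-class $C$ has $|C|\ge 3$. Transitivity of $\preceq$ forces all $|C|^{2}$ edges $\{x_i,y_j\}$ with $i,j\in C$ to be present, and forces the up-set $A^{*}:=\{j:a\preceq j\}$ to be independent of the choice of $a\in C$ (and to contain $C$). I would then strip $G$ down by two successive link operations, i.e.\ deletions of closed neighbourhoods, which preserve being aCM: since $\Delta_G$ is aCM, every iterated link is aCM, because a link of a Cohen--Macaulay complex is Cohen--Macaulay and, by \cite[Corollary 3.6]{MN2}, a link of an aCM complex is Cohen--Macaulay or aCM. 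Step one: delete the $y$-vertices indexed outside $A^{*}$; by the $V_2$-analogue of Lemma \ref{L3} this yields a graph $G''$ on the index set $A^{*}$ with $x_i\sim y_j$ iff $i\preceq j$, inside which $C$ is now a \emph{minimal} element of the quotient poset (if $j\in A^{*}$ and $j\preceq c\in C$, then $a\preceq j\preceq c\preceq a$, so $j\sim a$, i.e.\ $j\in C$). Step two: delete from $G''$ the $x$-vertices indexed by $A^{*}\setminus C$; using minimality of $C$ one checks that each $y_c$ ($c\in C$) survives and acquires no neighbour outside $\{x_c:c\in C\}$, so what remains is exactly $K_{|C|,|C|}$. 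Its independence complex is then an iterated link of $\Delta_G$, hence aCM — contradicting Lemma \ref{L1}, since $|C|\ge 3$. This proves the bound and completes the argument.

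The delicate point inside this is the bookkeeping in the two deletions: one must verify that passing to the up-set $A^{*}$ genuinely makes $C$ minimal in the quotient poset of $G''$, so that the second deletion produces the \emph{balanced} complete bipartite graph $K_{|C|,|C|}$ rather than merely some larger graph containing it; the identification of the surviving vertices on both the $x$- and the $y$-side has to be carried out carefully, and it is exactly minimality that prevents an extra predecessor of some $c\in C$ from keeping $y_c$ attached to a deleted $x$-vertex. By contrast, everything else is routine: extracting (i) and (iii) from Theorem \ref{T1}, the invariance of (i) and (iii) under reindexing, and the linear-extension construction yielding (ii). No separate base or degenerate cases are needed, since the argument is uniform in $t$ and in the sizes of the classes.
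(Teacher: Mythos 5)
Your proof is correct, but it takes a genuinely different route from the paper's. The paper gets (i) and (iii) from Villarreal's pure order exactly as you do, but obtains (ii) by first proving that an aCM complex of dimension at least $2$ is connected in codimension two (Theorem \ref{T2}) and then walking along a chain of facets $V_1=F_1,\dots,F_s=V_2$ with $|F_{k-1}\cap F_k|\geq n-2$, arguing that each $F_k$ may be taken of the form $\{y_1,\dots,y_i,x_{i+1},\dots,x_n\}$ and reading the labelling off from which pairs $\{x_i,y_j\}$ are forced to be faces of $\Delta$. You instead encode the edge set in the preorder $i\preceq j$ given by $\{x_i,y_j\}\in E(G)$ and localize the homological input entirely in the bound that every $\sim$-class $C$ has at most two elements, which you prove by realizing $K_{|C|,|C|}$ as an iterated link of $\Delta_G$ and invoking Lemma \ref{L1}; this is the same forbidden-complete-bipartite-link device the paper already uses inside the proof of Theorem \ref{T3}, and the ingredients you need (Lemmas \ref{L0} and \ref{L3} for links of independent sets, and \cite[Corollary 3.6]{MN2} for links of aCM complexes being CM or aCM) are all in place. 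The trade-off: the paper's argument ties the theorem to the codimension-two connectivity theme of the section and is the direct analogue of the CM-case argument of Herzog--Hibi and Haghighi--Yassemi--Zaare-Nahandi, but its final step (``a similar argument implies we may assume $F_k=\{y_1,\dots,y_i,x_{i+1},\dots,x_n\}$'') is left rather loose, whereas your linear-extension construction makes the derivation of (ii) fully explicit at the cost of bypassing Theorem \ref{T2} altogether. The two bookkeeping points you flag do check out: every surviving $x$-vertex of $G''$ has index in $A^{*}$ (an $x_i$ with $i\notin A^{*}$ lies in the closed neighbourhood of the deleted $y_i$), and every $y_j$ with $j\in A^{*}\setminus C$ is removed in the second deletion by its matched partner $x_j$, so the residual graph is exactly $K_{|C|,|C|}$ and the contradiction with Lemma \ref{L1} is genuine.
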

\begin{proof}
Since $G$ is unmixed bipartite graph, by Theorem \ref{T1} we have $\{ x_i, y_i \}$ are edges for $i = 1, \ldots, n$ and if $\{ x_i, y_j \}$ and $ \{ x_j , y_k \} $ are edges, then $\{x_i, y_k \}$ is an edge of $G$. Let $\Delta$ be the simplicial complex with $I_{\Delta} = I(G)$. Then $V_1$ and $V_2$ are facets of
$\Delta$, while $\{ x_i, y_i \}$ are not faces of $\Delta$.
By Theorem \ref{T2}, $\Delta$ is connected in codimension two, it follows that there is a
sequence of facets $F_1, \ldots , F_s$ of $\Delta$ with $F_1 = V_1$ and $F_s = V_2$ such that
$|F_{k-1} \cap F_k| \geq n -2$ for $k = 1, \ldots, s$. Then $|F_2 \setminus F_1 | \leq 2$,
say $F_2 \setminus F_1 = \{ y_1 \}$ or $F_2 \setminus F_1 = \{ y_1, y_2 \}$. Therefore $F_2 = \{ y_1, x_2, \ldots, x_n \}$ or $F_2 = \{ y_1,y_2, x_3, \ldots, x_n \}$ because $\{ x_1, y_1 \}$ and $\{ x_2, y_2 \}$ are not faces and $\Delta$ is pure. A similar argument as above implies that $|F_k \setminus F_{k-1} | \leq 2$ for $k=1, \cdots, s$ and hence we may assume that $F_k = \{ y_1, \ldots, y_i, x_{i+1}, \ldots, x_n \}$ for $k = 1, \ldots, s$ and for some $i$ such that if $ i > j+1$, then $\{x_i, y_j\}$ is a face of $\Delta$. Therefore, if $ i > j+1$, then $\{x_i, y_j\}$  is not an edge of $G$. In other word, if $\{x_i, y_j \}$ is an edge of $G$, then $i \leq j+1$.

\end{proof}

The following example shows that the converse of Theorem \ref{T12} is not true.

\begin{Example}
Let $G$ be the following graph which is unmixed and connected in codimension two and also  if $\{ x_i, y_j \} \in E(G)$, then $i \leq j+1$. But $G$ is not aCM, since $\dim(R/I)=6$ and $\depth(R/I)=4$.

\[\begin{tikzpicture}
	\vertex[fill] (x1) at (0,2) [label=above:$x_{1}$] {};
	\vertex[fill] (x2) at (1,2) [label=above:$x_{2}$] {};
	\vertex[fill] (x3) at (2,2) [label=above:$x_{3}$] {};
\vertex[fill] (x4) at (3,2) [label=above:$x_{4}$] {};
\vertex[fill] (x5) at (4,2) [label=above:$x_{5}$] {};
\vertex[fill] (x6) at (5,2) [label=above:$x_{6}$] {};
	\vertex[fill] (y1) at (0,0) [label=below:$y_{1}$] {};
	\vertex[fill] (y2) at (1,0) [label=below:$y_{2}$] {};
	\vertex[fill] (y3) at (2,0) [label=below:$y_{3}$] {};
\vertex[fill] (y4) at (3,0) [label=below:$y_{4}$] {};
\vertex[fill] (y5) at (4,0) [label=below:$y_{5}$] {};
\vertex[fill] (y6) at (5,0) [label=below:$y_{6}$] {};
	\path
		(x1) edge (y1)
		(x1) edge (y2)
		(x1) edge (y3)
		(x1) edge (y4)
		(x1) edge (y5)
		(x1) edge (y6)
		(x2) edge (y1)
		(x2) edge (y2)
		(x2) edge (y3)
(x2) edge (y4)
(x2) edge (y5)
(x2) edge (y6)
		(x3) edge (y3)
		(x3) edge (y4)
		(x4) edge (y3)
		(x4) edge (y4)
		(x5) edge (y5)
		(x5) edge (y6)
		(x6) edge (y5)
		(x6) edge (y6)
		
	;
\end{tikzpicture}\]
By Macaulay $2$, we have
\begin{align*}
\Ass(R/I)=& \{ ( x_1, x_2, x_3, x_4, x_5, x_6), ( x_1, x_2, x_3, x_4, y_5, y_6), ( x_1, x_2, x_5, x_6, y_3,y_4),\\
& ( x_1, x_2, y_3, y_4, y_5, y_6), ( y_1, y_2, y_3, y_4, y_5, y_6) \}.
\end{align*}
It is easy to check that for any pair of distinct minimal prime ideals $\frk{p}$ and $\frk{q}$ in $I(G)$, there exists a  sequence of minimal prime ideals $\frk{p}=\frk{p}_1, \ldots, \frk{p}_r=\frk{q}$ such that $\hte(\frk{p}_i +\frk{p}_{i+1})\leq 8= \hte(I)+2$. Therefore $I(G)$ is connected in codimension two.
\end{Example}

Given a positive integer $n$, and an $n$-tuple $\lambda=(\lambda_{1}, \ldots, \lambda_n)$ of positive integers such that
$m= \lambda_1 \geq \lambda_2 \geq \ldots \geq \lambda_n $, the Ferrers graph $G=G_{\lambda}$ associated to $\lambda$ defined by Corso and Nagel \cite{CN} and it is the bipartite graph with bipartition $V_1=\{ x_1, \ldots, x_n \}$ and $V_2=\{ y_1, \ldots, y_m \}$ such that if $(x_i, y_j)$ is an edge of $G$, then so is $(x_r, y_s)$ for $1 \leq r \leq i$ and $1 \leq s \leq j$. Suppose that $x_1, \ldots , x_n, y_1, \ldots , y_m$ are indeterminates over the field $k$. The
edge ideal of $G$ in the polynomial ring $R = k[x_1, \ldots , x_n, y_1, \ldots , y_m]$ denoted by $I_{\lambda}$.

In \cite{CN}, it is shown that
\begin{equation}\label{E.F1}
I_{\lambda} =\cap_{i=1}^{n+1} (x_1, \ldots, x_{i-1}, y_1, \ldots, y_{\lambda_{i}})
\end{equation}
where, by convenience of notation, we have set $\lambda_{n+1} =0$. Set $c_1 = 1$, and suppose that
\[ \lambda_{1}= \ldots = \lambda_{c_{2}-1} > \lambda_{c_2} = \ldots= \lambda_{c_{3}-1} > \lambda_{c_3}= \ldots= \lambda_{c_{k}-1} > \lambda_{c_{k}}=\ldots= \lambda_{n} . \]
Finally set $c_{k+1} = n + 1$. Then a minimal prime decomposition of $I_{\lambda}$ can be
obtained as follows, by omitting redundant terms from (\ref{E.F1}):
\begin{equation}\label{E.F2}
I_{\lambda} =\cap_{i=1}^{k+1} (x_1, \ldots, x_{c_{i}-1}, y_1, \ldots, y_{\lambda_{c_i}})
\end{equation}

\begin{Theorem}
Let $G$ be a unmixed Ferrers graph with associated partition $\lambda = (\lambda_1, \lambda_2, \ldots, \lambda_n)$ and let $I_{\lambda}$
be the edge ideal in $R=k[x_1, \ldots , x_n, y_1, \ldots , y_n]$ associated to $G$. Then $I_{\lambda}$ is aCM if and only if  $I_{\lambda}$ is connected in codimension two.

\end{Theorem}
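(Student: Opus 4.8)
The plan is to treat the two implications separately, after clearing away the range $n\le 2$: if $n\le 2$ then $\dim R/I_\lambda=n\le 2$, so $\depth R/I_\lambda\ge 1\ge\dim R/I_\lambda-1$ holds automatically and $I_\lambda$ is aCM, while every simplicial complex of dimension $\le 1$ is connected in codimension two; hence the equivalence is trivially true for $n\le 2$. So assume $n\ge 3$ and write $\Delta=\Delta_{G}$, a pure $(n-1)$-dimensional complex with $I_\Delta=I_\lambda$ and $\dim R/I_\lambda=n$. If $I_\lambda$ is aCM, then $\Delta$ is an aCM complex of dimension $n-1\ge 2$, so Theorem \ref{T2} immediately gives that $\Delta$, and hence $I_\lambda$, is connected in codimension two; this half uses nothing special about Ferrers graphs.

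For the converse the first step is to read the shape of $\lambda$ off the hypothesis. By $(\ref{E.F2})$ the minimal primes of $I_\lambda$ are $\frk{p}_i=(x_1,\dots,x_{c_i-1},y_1,\dots,y_{\lambda_{c_i}})$ for $i=1,\dots,k+1$, and since $G$ is unmixed each satisfies $\hte\frk{p}_i=(c_i-1)+\lambda_{c_i}=\hte I_\lambda=n$, i.e.\ $\lambda_{c_i}=n+1-c_i$. For $i<j$ one has $c_i<c_j$ and $\lambda_{c_i}>\lambda_{c_j}$, so $\frk{p}_i+\frk{p}_j=(x_1,\dots,x_{c_j-1},y_1,\dots,y_{\lambda_{c_i}})$ is a monomial prime of height $n+(c_j-c_i)$. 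I would then argue that connectedness in codimension two forces $c_{i+1}-c_i\le 2$ for every $i$: if some gap satisfied $c_{t+1}-c_t\ge 3$, then in any chain of minimal primes from $\frk{p}_t$ to $\frk{p}_{t+1}$ each prime has its index value $c_\ell$ either $\le c_t$ or $\ge c_{t+1}$ (there is no $c_\ell$ strictly between), so some consecutive pair in the chain would have sum of height $\ge n+3>\hte I_\lambda+2$, a contradiction. Consequently $\lambda$ is a ``staircase'' partition $\lambda=(v_1^{a_1},\dots,v_k^{a_k})$ with step widths $a_\ell=c_{\ell+1}-c_\ell\in\{1,2\}$ and $v_\ell=n+1-c_\ell$; in particular $\lambda_n=v_k=a_k\in\{1,2\}$, and deleting the last step of $\lambda$ produces a partition $\mu$ of $n':=n-a_k<n$ rows which again satisfies these two properties (unmixed, and all step widths $\le 2$), directly from the identities above.

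The second step is an induction on $n$ via Theorems \ref{T4} and \ref{T5}. I would relabel $y'_j:=y_{n+1-j}$; using $\lambda_{c_\ell}=n+1-c_\ell$ one checks that then $\{x_i,y'_i\}\in E(G)$ for all $i$ and that condition $(2)$ of Theorem \ref{T1} holds, so $\{x_1,\dots,x_n\}\cup\{y'_1,\dots,y'_n\}$ is a pure order of $G$ with $N_G(x_n)=\{y_1,\dots,y_{\lambda_n}\}=\{y'_n,\dots,y'_{n+1-\lambda_n}\}$. Put $H=G\setminus N_G[x_n]$ and $K=G\setminus N_G[\{y'_n,\dots,y'_{n+1-\lambda_n}\}]$. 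Since $\lambda_i\ge\lambda_n$ for every $i$, each of $y_1,\dots,y_{\lambda_n}$ is adjacent to all of $x_1,\dots,x_n$, so $K$ is the edgeless graph on $\{y_{\lambda_n+1},\dots,y_n\}$, whose Stanley--Reisner complex is a simplex and hence Cohen--Macaulay. Meanwhile $H$ is, up to one isolated vertex when $\lambda_n=2$, the Ferrers graph of the partition $\mu$ above, which is unmixed and connected in codimension two on $2n'$ vertices with $n'<n$; so by the induction hypothesis that Ferrers graph is aCM, and since coning over an isolated vertex preserves aCM-ness, $H$ is aCM. Now Theorem \ref{T4} (when $\lambda_n=1$) or Theorem \ref{T5} (when $\lambda_n=2$, with $K$ in fact Cohen--Macaulay) yields that $G$, hence $I_\lambda$, is aCM. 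The base $n\le 2$ has already been handled (and $K_{1,1}$, $K_{2,2}$ may alternatively be invoked through Lemma \ref{L1}).

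The step I expect to be the main obstacle is the bookkeeping in this last part: verifying that the reversal $y'_j=y_{n+1-j}$ genuinely produces the pure order demanded by Theorems \ref{T4} and \ref{T5}, and that $G\setminus N_G[x_n]$ really is (up to one isolated vertex) again an unmixed, connected-in-codimension-two Ferrers graph on fewer vertices, so that the induction closes. The remaining delicate point---deducing from connectedness in codimension two that all step widths are $\le 2$---is comparatively routine once the height formula $\hte(\frk{p}_i+\frk{p}_j)=n+(c_j-c_i)$ and the gap observation about $\{c_1,\dots,c_{k+1}\}$ are in hand.
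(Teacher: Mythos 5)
Your proposal is correct, and its ``only if'' half coincides with the paper's: both follow at once from Theorem \ref{T2} (after disposing of the trivial range $n\le 2$). For the ``if'' half you and the paper extract exactly the same combinatorial data from the hypotheses, namely that unmixedness forces $\lambda_{c_i}+c_i-1=n$ at every corner index and that connectedness in codimension two forces $c_{i+1}-c_i\le 2$, via the height computation $\hte(\frk{p}_i+\frk{p}_j)=n+(c_j-c_i)$ that you carry out; but from there the arguments diverge. The paper finishes in two lines by quoting the Corso--Nagel formula $\pd(R/I_\lambda)=\max_j\{\lambda_j+j-1\}$, observing that every $j$ lies in $\{c_k,c_k+1\}$ for its corner $c_k$ so that $\lambda_j+j-1\le n+1$, and invoking the numerical criterion $\pd(R/I)\le\hte(I)+1$ for aCM-ness. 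You instead run an induction on $n$ through the structural results of Section 2 (Theorems \ref{T4} and \ref{T5}), after checking that the reversal $y_j'=y_{n+1-j}$ is a pure order in the sense of Theorem \ref{T1}, that $K=G\setminus N_G[y_1,\dots,y_{\lambda_n}]$ is edgeless (hence Cohen--Macaulay, its independence complex being a simplex, since every $x_i$ satisfies $\lambda_i\ge\lambda_n$), and that $G\setminus N_G[x_n]$ is, up to one isolated vertex, again an unmixed Ferrers graph with all corner gaps at most $2$ on fewer rows. The two delicate points you flag do go through: condition (2) of Theorem \ref{T1} for the reversed order reduces to $\lambda_i=n+1-c_m\ge n+1-c_\ell=\lambda_j$ once one notes that the corner $c_m$ of the block of $i$ satisfies $c_m\le j$, hence $c_m\le c_\ell$; and the isolated vertex is harmless because coning shifts depth and dimension by one simultaneously. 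In short, the paper's route buys brevity and independence from Section 2 at the price of importing the projective-dimension formula for Ferrers ideals; your route is self-contained relative to the paper's own machinery but pays for it with the relabeling bookkeeping. Both are valid.
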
\label{T5}
\begin{proof}
$(\Longrightarrow). $ This is obvious by Theorem \ref{T2}.\\
$(\Longleftarrow).$ Let $V_1=\{ x_1, \ldots, x_n \}$ and $V_2=\{ y_1, \ldots, y_n \}$. Since $I_{\lambda}$ is the Ferrers ideal, by \cite[Corollary 2.2]{CN}, we have $\hte(I)=\min \{ \min_{1\leq j\leq n} \{ \lambda_{j}+j-1 \}, n \}$ and $\pd(R/I)=\max_{1\leq j\leq n} \{ \lambda_{j}+j-1 \}$. By \cite[Proposition 2.3]{MN2} it is enough to show that $n \leq \lambda_{j}+j-1 \leq n+1 $ for $j=1, \ldots, n$.
By \cite[Corollary 2.6]{CN}, we have
\begin{align*}
\Ass(R/I)= \{ (y_1, \ldots, y_n), (x_1, \ldots, x_{c_{2}-1}, y_1, \ldots, y_{\lambda_{c_2}}), \ldots, (x_1, \ldots, x_n) \}
\end{align*}
Since $I_{\lambda}$ is unmixed we have, $\lambda_{c_i}+c_{i}-1 =n$. Also since $I_{\lambda}$ is connected in codimension two, we have $\lambda_{c_i}-\lambda_{c_{i-1}} \leq 2$ and $c_{i} - c_{i-1} \leq 2$. Therefore for any $j$, there exists $\lambda_{c_{k}}$ such that $\lambda_{c_{k}}=\lambda_{j}$ and $ c_k\leq j \leq c_{k}+1$. Therefore $\max_{1\leq j\leq n} \{ \lambda_{j}+j-1 \} \leq n+1 $. Hence $\hte(I) +1 \geq \pd (R/I)$.
\end{proof}

--------------------------------------------------------------------------------

\end{document}